\documentclass[10pt]{amsart}

\usepackage{graphicx}

\theoremstyle{plain}
\newtheorem{theorem}{Theorem}[section]
\newtheorem{corr}[theorem]{Corollary}
\newtheorem{lem}[theorem]{Lemma}

\newtheorem{prop}[theorem]{Proposition}

\parskip 2pt plus 2pt minus 1pt\relax

\theoremstyle{definition}
\newtheorem{defi}[theorem]{Definition}
\newtheorem{rem}[theorem]{Remark}



\def\Gauss diagram{\Delta}


\def\R{{\mathbb R}}

\def\1{\hbox{\rm\rlap {1}\hskip.03in{\rom I}}}
\def\Bbbone{{\rm1\mathchoice{\kern-0.25em}{\kern-0.25em}
        {\kern-0.2em}{\kern-0.2em}I}}


\let\reference\ref

\def\bt{\bar t_2}

\let\ea\expandafter
\let\nx\noexpand
\makeatletter

\def\vcbox#1{\setbox\@tempboxa=\hbox{#1}\parbox{\wd\@tempboxa}{\box
  \@tempboxa}}

\def\@test#1#2#3#4{%
  \let\@tempa\go@
  \@tempdima#1\relax\@tempdimb#3\@tempdima\relax\@tempdima#4\unitxsize\relax
  \ifdim \@tempdimb>\z@\relax
    \ifdim \@tempdimb<#2%
      \def\@tempa{\@test{#1}{#2}}%
    \fi
  \fi
  \@tempa
}

\def\go@#1\@end{}
\newdimen\unitxsize
\newif\ifautoepsf\autoepsftrue
\autoepsffalse

\unitxsize4cm\relax
\def\epsfsize#1#2{\epsfxsize\relax\ifautoepsf
  {\@test{#1}{#2}{0.1 }{4   }
                {0.2 }{3   }
                {0.3 }{2   }
                {0.4 }{1.7 }
                {0.5 }{1.5 }
                {0.6 }{1.4 }
                {0.7 }{1.3 }
                {0.8 }{1.2 }
                {0.9 }{1.1 }
                {1.1 }{1.  }
                {1.2 }{0.9 }
                {1.4 }{0.8 }
                {1.6 }{0.75}
                {2.  }{0.7 }
                {2.25}{0.6 }
                {3   }{0.55}
                {5   }{0.5 }
                {10  }{0.33}
                {-1  }{0.25}\@end
                \ea}\ea\epsfxsize\the\@tempdima\relax
                \fi
                }

\def\CD#1{{\let\@nomath\@gobble\small\diag{6mm}{2}{2}{
  \pictranslate{1 1}{
    \piccircle{0 0}{1}{}
    #1
}}}}

\def\GD#1{{\let\@nomath\@gobble\scriptsize\diag{6mm}{3.0}{3.0}{
  \pictranslate{1.5 1.5}{
    #1
    \piccircle{0 0}{1}{}
}}}}

\def\GDD#1{{\let\@nomath\@gobble\scriptsize\diag{9mm}{3.0}{3.0}{
  \pictranslate{1.5 1.5}{
    #1
    \piccircle{0 0}{1}{}
}}}}

\begin{document}
\hyphenation{Ca-m-po}
\title[
Low complexity algorithms in knot theory
] 
{Low complexity algorithms in knot theory}

\author[O.~Kharlampovich, A.~Vdovina]{Olga Kharlampovich and Alina Vdovina}

\address{}

\email{}

\address{}

\email{}

\begin{abstract}

We show that the  genus problem for  alternating knots   with $n$ crossings has linear time complexity and is in Logspace$(n)$.  
 Almost all alternating knots of given genus  possess additional combinatorial
 structure, we call them standard. We show that the genus problem for these knots belongs to $TC^0$ circuit complexity class. We also show, that the  equivalence problem  for such knots with $n$ crossings  has time complexity $n\log (n)$ and is in Logspace$(n)$ and $TC^{0}$ complexity classes.

\end{abstract}

\maketitle

\section{Introduction}

Determining whether a given knot is trivial or not is one of the  central
questions in topology.  Dehn's work \cite{Dehn}  led to the
formulation of the word and isomorphism problems, which played the major 
role in the development of the theory of algorithms. The first algorithm for the
unknotting problem was given by Haken \cite{Haken}. HakenÕs procedure is based on normal
surface theory.   Hass,
Lagarias and Pippenger showed that HakenÕs unknotting algorithm runs in time at
most $C^t$
where the knot $K$ is embedded in the 1-skeleton of a triangulated manifold
$M$ with $t$ tetrahedra, and $C$ is a constant independent of $M$ and  $K$, see \cite {LP}.  They also showed that
the unknotting problem is in NP. Agol, Haas and Thurston  \cite{AHT} showed that the problem of determining a bound on the genus of a knot in a 3-manifold, is NP-complete.  For more details on NP and co-NP problems in knot theory see an excellent survey by Lackenby \cite{Lackenby1}. This shows that  (unless P=NP) the genus problem has high computational complexity even for knots  in a 3-manifold. 

In this paper we initiate the study of classes of knots where the genus problem and even the equivalence problem have very low computational complexity.  
We show that the  genus problem for  alternating knots   with $n$ crossings has linear time complexity and is in Logspace$(n)$.  
 Almost all alternating knots of given genus  possess additional combinatorial
 structure, we call them standard. We show that the genus problem for these knots belongs to $TC^0$ circuit complexity class. We also show, that the  equivalence problem  for such knots with $n$ crossings  has time complexity $n\log (n)$ and is in Logspace$(n)$ and $TC^{0}$ complexity classes.
 
 Recall, that
$AC^0$
($TC^0$
) is the class of functions computed by constant depth boolean
circuits of unbounded fan-in AND, OR, and NOT gates (MAJORITY gates,
respectively). The relationship is as follows $TC^0\subseteq L\subseteq P$, where $P$ is the class of polynomial time problems. 

The main tool applied in our algorithms are
quadratic words in a free group. Such words with some additional structure are  called Wicks forms.
It was shown in \cite{SV} that the alternating diagrams obtained from
planar maximal Wicks forms are standard alternating, and
an unexpected consequence of this result is
that generically an alternating knot of any genus 
(higher than one) is a standard alternating knot.

For  basic knot theoretic definitions see \cite{Rolfsen}.

\section{Statement of results and structure of the paper}

Seifert algorithm if a standard tool to associate an orientable surface with a boundary to a knot, which can be found in, for example \cite{Rolfsen},
but we remind it here for the completeness of the arguments.

Seifert's Algorithm(1934, Herbert Seifert):
\begin{itemize}
\item Input := a knot $K$.
 \item Output := an orientable surface $S_k$ with boundary component $K$.
 \end{itemize}
 
Algorithm:
\begin{enumerate}
\item Start with a diagram of $K$.
\item Give it an orientation.
\item Eliminate the crossings as follows: First note that at each crossing two strands come in and two come out. Then connect each of the strands coming into the crossing to the adjacent strand leaving the crossing.
\item Fill in the circles, so each circle bounds a disk.
\item Connect the disks to one another, at the crossings of the knots, by twisted bands.

\end{enumerate}

\begin{defi}
The oriented circles appearing in the Seifert's Algorithm
are called Seifert Circles.
\end{defi}

We note, that
by changing the orientation we get the same Seifert Circles but with opposite
orientation. Therefore the result is independent of the orientation.

\begin{defi}
For a diagram $D$ of knot $K$, we define the {\em genus} $g(D)$ as the
genus of the surface obtained by applying the Seifert algorithm to
this diagram. It can be expressed as
\[
g(D)=\frac{c(D)-s(D)+1}{2}\,,
\]
with $s(D)$ being the number of Seifert circles of $D$. 
\end{defi}

\begin{defi}
A planar diagram is alternating if the over-crossings and under-crossings alternate.
A knot is alternating if it has an alternating diagram. 
\end{defi}

\begin{theorem}\label{th:genus}
The  genus problem for  alternating knots with $n$ crossings has linear time complexity and is in Logspace$(n)$. For an arbitrary knot diagram there is an algorithm with the same complexity that determines the genus of the diagram.   The genus problem for  standard (see Definition 4.9) alternating knots with $n$ crossings is in 
 $TC^{0}$ complexity class.
\end{theorem}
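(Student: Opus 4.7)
The plan is to reduce the genus problem to the Seifert formula $g(D)=(c(D)-s(D)+1)/2$ from Definition~2.3. The crossing count $c(D)=n$ is read directly from the input, so computing $g(D)$ amounts to computing $s(D)$, the number of Seifert circles. For alternating knots, the classical Crowell--Murasugi theorem gives $g(K)=g(D)$ on any reduced alternating diagram $D$, so the diagram genus is already the actual knot genus; this is the only place where the alternating hypothesis enters the first part of the theorem.

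To compute $s(D)$ in linear time and $O(\log n)$ space, I would encode the diagram by its $4$-valent underlying planar graph on $n$ vertices, represented for example by a doubly-linked list of half-edges around each crossing together with over/under decorations (total size $O(n)$). The Seifert circles are exactly the orbits of the permutation $\sigma$ of the $4n$ half-edges obtained by connecting each incoming strand at a crossing to the adjacent outgoing strand respecting orientation. Traversing these orbits one at a time and counting them requires only a constant number of $O(\log n)$-bit pointers (current position, orbit-start position, circle counter, index of the next unvisited half-edge), which fits in Logspace$(n)$; the total work is $O(n)$ since each half-edge is visited once. The same procedure computes $g(D)$ for an arbitrary diagram, with the caveat that this need no longer equal $g(K)$.

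For the $TC^0$ claim on standard alternating knots, I would exploit the extra structure provided by Definition~4.9 and the Wicks-form correspondence of \cite{SV}. A standard alternating diagram arises from a planar maximal Wicks form, i.e.\ a quadratic word in a free group with a prescribed planar pairing of letters encoding the crossings. From this encoding, $s(D)$ can be read off by recognizing explicit local patterns in the cyclic word (runs of letters delimited by a fixed finite collection of orientation configurations) that correspond bijectively to Seifert circles; each local test is a constant-size Boolean predicate applied in parallel at each position, and the number of matches is then a threshold-circuit sum, which is in $TC^0$ by the standard iterated-addition construction.

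The main obstacle is the $TC^0$ step. Counting cycles of a permutation given as abstract input is not transparently in $TC^0$, since a naive implementation requires iterating $\sigma$ up to $n$ times. The whole leverage comes from the rigidity of planar maximal Wicks forms: once I can exhibit each Seifert circle as the image of a local syntactic pattern in the Wicks word, membership in a circle becomes a local predicate rather than a reachability problem, and global counting collapses to bounded-depth threshold addition. Pinning down exactly which local patterns capture the Seifert circles in a planar maximal Wicks form — and verifying that the corresponding predicates can be checked by constant-depth AND/OR/MAJORITY circuits — is the technical heart of the argument and will be carried out in Section~4.
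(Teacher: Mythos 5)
Your overall strategy --- reduce the genus to counting Seifert circles, i.e.\ counting the cycles of a permutation on the half-edges (equivalently, the vertices of the graph $\Gamma$ associated to the quadratic word) and then applying $g(D)=(c(D)-s(D)+1)/2$ together with Crowell--Murasugi --- is the same as the paper's, and the linear-time part is fine. But your Logspace claim has a genuine gap: you cannot run a single orbit-traversal that is simultaneously linear time and $O(\log n)$ space. Your list of pointers includes ``index of the next unvisited half-edge,'' and knowing which half-edges have been visited requires a visited array of $\Theta(n)$ bits, which is not Logspace. The standard logspace workaround (count an element only if it is the minimum of its orbit, recomputing each orbit from scratch) costs superlinear time, since Seifert circles of a general alternating diagram can be arbitrarily long. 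The paper avoids this by giving two separate algorithms: BFS/DFS on the letter-pairing graph for the linear-time bound (Proposition \ref{prop:genus}), and, for Logspace, an appeal to the Cook--McKenzie result that the product of two permutations given in cycle notation can be computed in Logspace, applied to the cycle $(w)$ times the involution $\prod_i(a_i,a_i^{-1})$ whose product's cycles are the vertices of $\Gamma$ (Proposition \ref{prop:genus2}). You need either to split your claim into two algorithms in the same way, or to cite such a logspace cycle-counting result explicitly.

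For the $TC^{0}$ part your diagnosis of the obstacle is exactly right, and the leverage you are looking for is already in Definition 4.9: a standard diagram has all Seifert circles of valence $2$ or $3$, so every cycle of the relevant permutation has length at most $3$. Hence membership of a position in a circle is decided by at most two applications of the permutation, each $2$-cycle and $3$-cycle is detected by a constant-depth local predicate, and the final count is an iterated addition (number of distinct triples divided by three plus the number of pairs), which is in $TC^{0}$. This is precisely the paper's Proposition \ref{prop:genus1}. As written, however, you defer ``pinning down the local patterns'' to later, so this part of your argument is a plan rather than a proof; the missing ingredient is simply the bounded cycle length coming from the valence condition, which you should state and use explicitly rather than searching for syntactic patterns in the Wicks word.
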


\begin{theorem}\label{th:iso}
The isomorphism problem for standard  alternating knots with $n$ crossings  has time complexity $n\log (n)$ and is in Logspace$(n)$ and $TC^{0}$ complexity classes.

\end{theorem}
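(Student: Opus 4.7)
The plan is to reduce the equivalence problem for standard alternating knots to a canonical-form comparison for cyclic quadratic words, using the correspondence between standard alternating knots and planar maximal Wicks forms established in \cite{SV}.

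First I would fix, for each standard alternating knot $K$ with $n$ crossings, the canonical Wicks form $w(K)$ associated to its alternating diagram via the Seifert surface construction reviewed in Section~2: reading the boundary of a regular neighborhood of the Seifert surface produces a quadratic word in the generators corresponding to the twisted bands, and the standardness hypothesis (Definition~4.9) guarantees that this word is a planar maximal Wicks form. Two such knots $K_1$ and $K_2$ are equivalent if and only if the associated diagrams are related by flypes (Tait/Menasco--Thistlethwaite); but a flype on a reduced alternating diagram coming from a maximal planar Wicks form preserves the underlying combinatorial Wicks data, so $K_1 \sim K_2$ reduces to the equality of $w(K_1)$ and $w(K_2)$ as cyclic words up to the natural symmetries (cyclic rotation, mirror reflection inverting the word, and relabeling of the paired letters).

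Next I would produce a canonical representative $\hat w(K)$ of the equivalence class: encode the Wicks form as a string of length $2n$ over an alphabet of size $n$, and take the lexicographically smallest rotation of both $w$ and its reflection, after normalizing the letter names by the order of first occurrence. Computing the lex-smallest rotation of a string of length $N$ can be done in $O(N)$ or $O(N \log N)$ time by Booth's algorithm, in deterministic Logspace (it amounts to iterated comparison of rotations, each of which is a Logspace computable predicate), and in $TC^0$ (sorting all $N$ rotations is a standard $TC^0$ task since sorting $N$ length-$N$ strings over a polynomial-size alphabet lies in $TC^0$). Comparing the two normalized canonical forms is then a string equality check, which is in Logspace and in $TC^0$. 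Since extracting $w(K)$ from the diagram is already a linear-time, Logspace, $TC^0$ task (it is a single pass through the crossings, essentially the same traversal used in the genus algorithm of Theorem~\ref{th:genus}), the overall bounds $n\log n$ time, $\mathrm{Logspace}(n)$, and $TC^0$ follow.

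The main obstacle is the second step: one must verify that the canonicalized Wicks form is a \emph{complete} invariant on the class of standard alternating knots, i.e.\ that no two non-equivalent standard alternating knots share the same $\hat w$, and conversely that the flype equivalence induced on diagrams never leaves the Wicks-form equivalence class. The first direction uses the reconstruction of the alternating diagram from the planar maximal Wicks form in \cite{SV}; the second uses the observation that in a standard diagram the flype moves permitted by Menasco--Thistlethwaite act on the Wicks word by exactly the rotation/reflection/relabeling symmetries we factor out. Once this is in place, the complexity bounds are standard facts about cyclic word canonicalization and string sorting in $TC^0$.
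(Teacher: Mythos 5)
Your overall architecture matches the paper's: first reduce the equivalence of standard alternating knots to the equivalence of the associated quadratic (extended Wicks) words up to cyclic rotation and relabeling of letters (this is the paper's Theorem \ref{th:7.2}, proved via Menasco--Thistlethwaite together with the fact that standard generating diagrams admit no non-trivial flypes), and then solve the word problem with low-complexity string algorithms. The gap is in your canonicalization step. You propose to ``normalize the letter names by the order of first occurrence'' and \emph{then} take the lexicographically smallest rotation via Booth's algorithm. These two operations do not commute: the first-occurrence relabeling depends on where the cyclic word is cut, so two words that are rotations of each other can normalize to strings that are \emph{not} rotations of each other. For example, $w=xyx^{-1}y^{-1}$ normalizes to $aba^{-1}b^{-1}$, while its rotation $yx^{-1}y^{-1}x$ normalizes to $ab^{-1}a^{-1}b$; the lex-least rotations of these two normalized strings differ, so your $\hat w$ is not a well-defined invariant of the cyclic-plus-relabeling equivalence class. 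The correct canonical form would require renormalizing separately for each of the $2n$ starting points before minimizing, which naively costs $\Theta(n^2)$ and is no longer just Booth's algorithm.

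The repair is exactly the device the paper uses in Proposition \ref{prop:iso}: replace each letter occurrence by the (cyclic) distance to the position of its paired occurrence, producing a sequence $d_1(w),\dots,d_{2n}(w)$ over $\{1,\dots,2n-1\}$ that is invariant under relabeling and equivariant under rotation. Two words are equivalent iff their $d$-sequences are cyclic rotations of each other, which the paper decides by Knuth--Morris--Pratt in time $O(n\log n)$ (the $\log n$ coming from arithmetic on $O(\log n)$-bit positions), and in $TC^0$ by comparing all $2n$ rotations in parallel (Proposition \ref{prop:iso1}). If you first pass to the $d$-sequence, your Booth's-algorithm/lex-least-rotation idea does become sound and yields the same complexity bounds, so the gap is repairable; but as written the canonical form is broken. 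A secondary point: you quotient additionally by mirror reflection (reversal of the word), which the paper sidesteps by declaring that realizable Gau\ss{} diagrams classify alternating diagrams only up to mirroring; if you do want reversal in the symmetry group it must also be applied to the $d$-sequence encoding, which is routine but should be said.
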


The paper is organized as follows: we start with some relevant definitions and known facts in Section 3, Section 4 includes explanations why almost all alternating knots are standard.
Section 5 describes an algorithm of getting a standard alternating knot using a Bieulerian path
in a 3-connected planar 3-valent graph. Section 6 defines extended Wicks forms.
Sections 7 and 8 bring all facts together to prove the main results.

\section{Preliminaries and genus problem for alternating knots}

We start with some classical definitions and recall important
properties of alternating knots and links.

\begin{defi}
A crossing $p$ in a knot diagram $D$ is called {\em reducible}
(or nugatory) if $D$ can be represented in the form
\begin{figure}[htbp]
\includegraphics[scale=0.7]{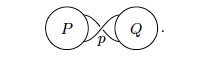}
\end{figure}

$D$ is called reducible if it has a reducible crossing, else it is
called {\em reduced}.
\end{defi}

\begin{defi}
Denote by $c(D)$ the {\em crossing number} of a knot diagram $D$. The
{\em crossing number} $c(K)$ of a knot $K$ is the minimal crossing
number $c(D)$ of all diagrams $D$ of $K$.
\end{defi}

\begin{theorem}({\cite{Kauffman,Murasugi2,Thistle})}
An alternating knot with a reduced alternating diagram of
$n$ crossings has crossing number $n$.
\end{theorem}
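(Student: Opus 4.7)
The plan is to prove this via the Kauffman bracket polynomial (equivalently, the Jones polynomial), following the classical approach that settled the first Tait conjecture. The key invariant is the \emph{span} (breadth) of the bracket polynomial; for a reduced alternating diagram with $n$ crossings I would show the span equals exactly $4n$, while for any diagram $D'$ the span is bounded by $4c(D')$. Combined these immediately give $c(D')\ge n$ for every diagram $D'$ of $K$, so $c(K)=n$.

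First I would recall the state-sum definition: for a diagram $D$ with $n$ crossings, a \emph{state} $s$ assigns to each crossing either an $A$- or $B$-smoothing, and
\[
\langle D\rangle \;=\; \sum_{s} A^{\,a(s)-b(s)}\bigl(-A^{2}-A^{-2}\bigr)^{|s|-1},
\]
where $a(s),b(s)$ count $A$- and $B$-smoothings and $|s|$ is the number of resulting circles. Let $s_A$ (resp.\ $s_B$) be the number of circles in the all-$A$ (all-$B$) state. A direct count of the extremal $A$-exponents in this sum gives the inequalities
\[
M(D):=\max\deg\langle D\rangle \le n + 2(s_A-1),\qquad m(D):=\min\deg\langle D\rangle \ge -n-2(s_B-1),
\]
and hence $\mathrm{span}\langle D\rangle = M(D)-m(D) \le 2n + 2(s_A+s_B) - 4$.

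The crucial combinatorial step is the dual lemma of Kauffman--Murasugi--Thistlethwaite:
\[
s_A(D)+s_B(D) \;\le\; c(D)+2
\]
for any connected diagram $D$, with equality if and only if $D$ is reduced and alternating. I would prove this by checkerboard-shading the complement of $D$ and observing that the all-$A$ and all-$B$ state circles are precisely the boundaries of the two checkerboard regions; the inequality then follows from Euler's formula applied to the two associated planar graphs, and the equality analysis reduces to checking that a crossing is nugatory exactly when a face of one shaded graph has a cut vertex, and that failure of alternation forces strict inequality at some crossing.

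Combining the two steps: for a reduced alternating diagram $D$ with $n$ crossings, $\mathrm{span}\langle D\rangle = 4n$, whereas for any diagram $D'$, $\mathrm{span}\langle D'\rangle \le 4c(D')$. Since the span of $\langle D\rangle$ is invariant under Reidemeister moves II and III, and changes in a controlled way (by $\pm 4$) under Reidemeister I in such a way that the quantity $\mathrm{span}\langle D\rangle$ is actually an invariant of the unoriented knot (this is the standard observation that the Jones polynomial's span is a knot invariant), we get $4n = \mathrm{span}\langle D\rangle = \mathrm{span}\langle D'\rangle \le 4c(D')$, hence $c(D')\ge n$. Taking $D'$ to be a minimal diagram yields $c(K)=n$. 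The main obstacle is the combinatorial lemma $s_A+s_B\le c(D)+2$ with its equality characterization; everything else is a formal manipulation of the state sum.
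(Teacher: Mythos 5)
The paper does not prove this statement at all --- it is quoted as Theorem~3.3 with citations to Kauffman, Murasugi and Thistlethwaite --- so the only meaningful comparison is with the classical proof in those sources, which is exactly the route you take (span of the Kauffman bracket). Your overall architecture is right: $\mathrm{span}\langle D'\rangle\le 4c(D')$ for every diagram, $\mathrm{span}\langle D\rangle=4n$ for the given reduced alternating diagram, and invariance of the span under Reidemeister moves then force $c(K)=n$.

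There is, however, a genuine gap at the one place where the hypothesis ``reduced'' actually does work. The two displayed estimates $M(D)\le n+2(s_A-1)$ and $m(D)\ge -n-2(s_B-1)$ only bound the span from \emph{above}; combined with $s_A+s_B\le n+2$ they give $\mathrm{span}\langle D\rangle\le 4n$ for the reduced alternating diagram as well, which yields nothing. To conclude $\mathrm{span}\langle D\rangle=4n$ you must show these bounds are attained, i.e.\ that the coefficient of $A^{\,n+2(s_A-1)}$ (resp.\ of $A^{-n-2(s_B-1)}$) is nonzero. That is the adequacy argument: in the all-$A$ state of a reduced alternating diagram no state circle abuts itself at a crossing, so switching any single smoothing strictly decreases the circle count, hence every other state contributes strictly smaller top degree and the extremal term of the all-$A$ state survives uncancelled (dually for $B$). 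Your sketch omits this and instead locates the role of reducedness in the equality case of the lemma $s_A+s_B\le c(D)+2$; but that equality holds for \emph{every} connected alternating diagram, reduced or not (adding a kink raises both sides by one), and it can also hold for non-alternating non-prime diagrams, so the ``if and only if $D$ is reduced and alternating'' clause is false in both directions. Relatedly, the checkerboard identification of the all-$A$ and all-$B$ state circles with the two families of faces proves the \emph{equality} for connected alternating diagrams, but the general \emph{inequality} for arbitrary diagrams (needed for the upper bound $\mathrm{span}\langle D'\rangle\le 4c(D')$) requires a separate argument, e.g.\ induction on the number of crossings. Both repairs are standard, but as written the proof establishes only $c(K)\le n$.
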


\begin{defi}
For a diagram $D$ of knot $K$, we define the {\em genus} $g(D)$ as the
genus of the surface obtained by applying the Seifert algorithm to
this diagram. It can be expressed as
\[
g(D)=\frac{c(D)-s(D)+1}{2}\,,
\]
with $s(D)$ being the number of Seifert circles of $D$. 
\end{defi}

The importance of this definition relies on the following classical
fact:

\begin{theorem}{\rm (\cite{Crowell,Murasugi})}
An alternating knot with an alternating diagram of genus $g$
has genus $g$.

\end{theorem}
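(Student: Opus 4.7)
The proof plan separates into two inequalities. The easy direction is $g(K) \leq g(D)$: the surface $F_D$ produced by Seifert's algorithm is by construction an orientable spanning surface for $K$, so its genus is an upper bound for the knot genus $g(K)$. The content of the theorem is the reverse inequality, and that is what I would concentrate on.

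For the hard direction $g(K) \geq g(D)$, the cleanest classical route (which is Crowell's original argument) goes through the Alexander polynomial. The two inputs are:
\begin{enumerate}
\item[(a)] For any Seifert surface $F$ of $K$ with Seifert matrix $V$, one has $\Delta_K(t) \doteq \det(tV - V^T)$ up to multiplication by $\pm t^k$. If $F$ has genus $h$, then $V$ is a $2h \times 2h$ integer matrix, so $\deg \Delta_K(t) \leq 2h$. Taking $F$ to be a minimal genus Seifert surface gives $\deg \Delta_K(t) \leq 2g(K)$.
\item[(b)] When $D$ is a reduced alternating diagram, $\deg \Delta_K(t) = 2g(D)$.
\end{enumerate}
Combining (a) and (b) yields $2g(K) \geq \deg \Delta_K(t) = 2g(D)$, which together with the easy direction gives equality.

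The substantive step, and the main obstacle, is (b). I would prove it by writing down the Seifert matrix $V$ of $F_D$ explicitly in a basis of $H_1(F_D;\Z)$ coming from the Seifert graph $\Gamma_D$ (vertices = Seifert circles, edges = crossings). The polynomial $\det(tV - V^T)$ has degree $2g(D)$ exactly when its leading coefficient $\det V$ is nonzero, so everything reduces to showing $\det V \neq 0$ for a reduced alternating diagram. Here the alternating hypothesis is indispensable: after choosing signs for the crossings compatibly with the alternating structure (the checkerboard coloring gives a canonical bipartition of the contribution to $V + V^T$), the off-diagonal entries of $V$ inherit a uniform sign pattern, and the resulting matrix can be identified with a signed incidence-type matrix of $\Gamma_D$. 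A matrix-tree style argument then expresses $|\det V|$ as a positive count of spanning structures of $\Gamma_D$, which is nonzero precisely because the diagram is reduced (no nugatory crossing means $\Gamma_D$ has no cut edge producing a degenerate contribution).

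The remaining details are routine: verifying that the trailing coefficient of $\det(tV - V^T)$ equals $\pm \det V^T = \pm \det V$ (so it is also nonzero, confirming that no cancellation lowers the degree from above or below), and checking that reducedness of $D$ is genuinely needed only to rule out the trivial cancellation coming from a nugatory crossing (for which one can pass to a simpler reduced diagram with the same $K$ and strictly smaller $g(D)$). The alternating plus reduced hypothesis thus does exactly the work of forcing the Alexander degree to saturate the bound coming from $F_D$, which is the precise mechanism behind the theorem.
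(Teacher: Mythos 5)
This is a result the paper does not prove at all: it is quoted verbatim from Crowell and Murasugi, so the only meaningful comparison is against the classical sources, and your overall strategy (easy inequality $g(K)\leq g(D)$ from Seifert's algorithm, hard inequality via $2g(K)\geq \deg\Delta_K(t)=2g(D)$ for a reduced alternating diagram) is indeed Crowell's route. The skeleton is right, and you correctly locate all of the content in step (b).

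There is, however, a genuine gap in the mechanism you propose for (b). The claim that the Seifert matrix $V$ of $F_D$, in a cycle basis of the Seifert graph $\Gamma_D$, acquires a ``uniform sign pattern'' from the alternating structure so that $|\det V|$ is directly a matrix--tree count of spanning structures of $\Gamma_D$ is only correct for \emph{special} alternating diagrams, i.e.\ those in which no Seifert circle separates crossings into its interior and exterior (equivalently, $F_D$ is a checkerboard surface). In that case the leading coefficient of $\Delta_K$ really is $\pm$ the number of spanning trees of $\Gamma_D$ (Murasugi), and your argument goes through. For a general alternating diagram the Seifert circles are nested, the linking numbers of basis cycles lying on opposite sides of a separating Seifert circle do not share a sign, and no single global sign normalization of $V$ exists; the matrix--tree identification breaks down as stated. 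The classical repair is to decompose the diagram along its separating Seifert circles into special alternating pieces and observe that both $\deg\Delta_K$ and $g(D)$ are additive under this Murasugi sum (this is essentially Murasugi's proof; Crowell instead runs a direct induction with his calculus of graphs on the link diagram). You would also want to note that a nugatory crossing changes $c(D)$ and $s(D)$ by $1$ each, so $g(D)$ is unchanged under removing it; this reduces the non-reduced case (which the theorem as stated in the paper covers) to the reduced one, rather than the vaguer appeal to ``a simpler reduced diagram with strictly smaller $g(D)$'' in your last paragraph.
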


Theorems 3.3 and 3.5 imply that to determine the genus and the crossing number of an alternating knot it is sufficient to consider its
alternating diagram.  It has the same genus and  crossing number.

Knots diagrams give rise to quadratic words in the following way.
 
Knots (smooth embeddings of ${\mathbb S}^1$ to ${\mathbb R}^3$) are usually presented
by knot diagrams that are generic immersions of ${\mathbb S^1}$ to 
${\mathbb R}^2$-plane enhanced by information of over-passes and under-passes at the double points. To correspond a quadratic word to a knot diagram $D$ one assigns a letter 
to each double point of the immersion, and the preimages of each double point are denoted by this letter with opposite exponents, 1 and -1.

Our algorithm of computing the genus of an alternating diagram is based on 
the fact that the genus of an alternating diagram and the corresponding quadratic word
coincide, what is shown by the following theorem.

\begin{theorem}
The genus of an alternating diagram is the same as  the genus of the corresponding  quadratic word.
\end{theorem}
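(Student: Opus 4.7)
The plan is to express both genera as Euler characteristics and match them via a bijection between Seifert circles and vertex classes of a polygon gluing. Let $n=c(D)$, and let $w_D=x_1x_2\cdots x_{2n}$ be the cyclic quadratic word obtained by traversing the oriented knot and recording each crossing label with exponent $+1$ at the first visit and $-1$ at the second. The genus of $w_D$ is the genus of the closed oriented surface $\Sigma(w_D)$ obtained by identifying the sides of a $2n$-gon, labelled $x_1,\ldots,x_{2n}$ in cyclic order, according to common letters. Since each letter appears with exponents $+1$ and $-1$, the two occurrences glue with reversed orientations, producing a CW decomposition of $\Sigma(w_D)$ with one face, $n$ edges, and $V(w_D)$ vertex classes. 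Thus
\[
\chi(\Sigma(w_D))=V(w_D)-n+1,\qquad g(w_D)=\frac{n+1-V(w_D)}{2}.
\]
On the other hand, the Seifert surface $\Sigma_D$ has $\chi(\Sigma_D)=s(D)-n$ ($s(D)$ disks minus $n$ bands) and a single boundary component, yielding $g(D)=(n+1-s(D))/2$, in agreement with Definition~3.4.

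It therefore suffices to show $V(w_D)=s(D)$. I would label the arcs of $D$ between consecutive crossings as $\alpha_1,\ldots,\alpha_{2n}$ in cyclic order along the orientation, and identify the polygon vertex $v_k$ between the edges $x_k$ and $x_{k+1}$ with the arc $\alpha_k$. At a crossing $c$ whose two visits occur at positions $p$ and $q$ of the word, the gluing identifies $v_{p-1}\sim v_q$ and $v_p\sim v_{q-1}$; translated to the diagram, this says that the arc incoming to the first visit is identified with the arc outgoing from the second visit, and the arc outgoing from the first visit with the arc incoming to the second visit. This is exactly the pairing induced by Seifert's smoothing rule at $c$, which sends each incoming strand to the outgoing strand on the \emph{other} branch so as to preserve orientation. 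Hence the vertex equivalence classes of the polygon coincide with the orbits of the Seifert smoothing permutation on the arcs, i.e.\ with the Seifert circles, proving $V(w_D)=s(D)$.

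The main obstacle lies in the bookkeeping of this bijection: one must carefully align the cyclic ordering of edges and vertices of the polygon with the cyclic ordering of visits and arcs along the knot, and verify that the $\pm1$ sign convention in $w_D$ produces precisely the orientation-reversing identification that realises the Seifert smoothing. Once this correspondence is in place, the identity of the two permutations is formal, and combining it with the two Euler characteristic computations gives $g(w_D)=g(D)$.
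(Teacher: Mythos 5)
Your proof is correct, but it takes a genuinely different route from the paper: the paper disposes of this theorem in two lines by citing \cite{STV} (and, somewhat redundantly for the statement as phrased, Theorem 3.5), whereas you give a self-contained combinatorial argument. Your two Euler characteristic computations are right ($\chi(\Sigma(w_D))=V-n+1$ for the closed surface with one face and $n$ edges, and $\chi(\Sigma_D)=s(D)-n$ with one boundary component, so both genera reduce to comparing $V(w_D)$ with $s(D)$), and the key identification is also right: since $x_p$ and $x_q$ carry opposite exponents, the polygon gluing matches the edge from $v_{p-1}$ to $v_p$ with the reversed edge from $v_{q-1}$ to $v_q$, giving $v_{p-1}\sim v_q$ and $v_p\sim v_{q-1}$, which is precisely the Seifert rule sending the incoming arc of each pass to the outgoing arc of the \emph{other} pass (the only orientation-preserving smoothing at a transverse double point). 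Hence the vertex classes are the Seifert circles and $V(w_D)=s(D)$. What your approach buys is transparency and generality: the argument nowhere uses that the diagram is alternating, so it proves the diagram-level identity for arbitrary knot diagrams, which is in fact what Proposition 8.2 of this paper implicitly relies on; what the paper's citation buys is brevity and the link to the knot-level statement (via Crowell--Murasugi) that the word genus computes the actual genus of an alternating knot, which your argument does not address and is not claimed by the theorem.
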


\begin{proof}
By the Theorem 3.5 the genus of an alternating knot $K$ is equal to the genus of an alternating
diagram of $K$. It was shown in \cite{STV} that the genus of an alternating diagram is equal to the genus
of the corresponding quadratic word.
\end{proof}

\section{Introduction to standard knots}

By the work of Menasco and Thistlethwaite \cite{MenThis}, alternating
knots are  related to  diagrammatic move called flype.

\begin{defi}
A {\em flype} is a move on a diagram shown in figure \reference{fig1}.

\begin{figure}[htbp]
\includegraphics[scale=0.7]{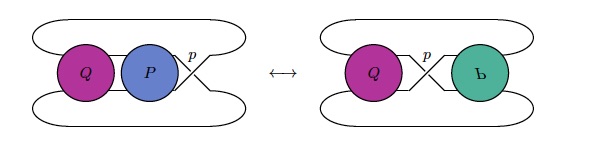}
\caption{A flype near the crossing $p$\label{fig1}}
\end{figure}

\begin{theorem} {\rm (\cite{MenThis})} \label{TMT}
Two alternating diagrams of the same knot or link are flype-equivalent,
that is, transformable into each other by a sequence of flypes.
\end{theorem}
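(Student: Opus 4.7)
The plan is to follow the Menasco--Thistlethwaite strategy of using the projection $2$-sphere of an alternating diagram as an essential surface in the link complement. Given a reduced alternating diagram $D$ of a link $L \subset S^{3}$, replace each crossing by a small $3$-ball (a \emph{crossing ball}) whose boundary meets $L$ in four points and whose interior contains the over/under arcs. The projection plane, closed up in $S^{3}$ and punctured by the crossing balls, gives a surface $S$ properly embedded in the exterior of $L$. By Menasco's earlier work on alternating link complements, $S$ is essential, i.e., incompressible and meridionally (pairwise) incompressible.

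Given two reduced alternating diagrams $D_{1}, D_{2}$ of the same $L$, one obtains essential surfaces $S_{1}, S_{2}$ which I would isotope, rel boundary, into transverse position minimizing $|S_{1} \cap S_{2}|$. I would first eliminate all intersection circles by the standard innermost-disk argument: an innermost disk cut off on $S_{1}$ by an intersection circle compresses $S_{2}$, so by incompressibility the circle bounds a disk in $S_{2}$ as well; the resulting $2$-sphere bounds a ball by irreducibility of $S^{3} \setminus L$, and an isotopy across this ball reduces $|S_{1} \cap S_{2}|$, contradicting minimality. Hence after minimization the intersection consists only of arcs with endpoints on the crossing-ball boundaries.

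Next I would classify the intersection arcs. Each arc, together with sub-arcs on crossing-ball boundaries, cuts $S_{1}$ and $S_{2}$ into regions whose structure is rigidly constrained by the alternating hypothesis: the over/under pattern along the arc forces the two diagrams to agree outside a $2$-string tangle $T$ attached to a single crossing $p$, and inside $T$ they differ exactly by reflection through the axis of $p$. That is, each irreducible intersection arc corresponds to a single flype. Induction on the number of intersection arcs then expresses $D_{2}$ as a flype-sequence from $D_{1}$, after which the two diagrams coincide up to ambient isotopy of the sphere, which is itself a sequence of trivial (planar) flypes.

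The main obstacle is the combinatorial analysis in the third step: ruling out intersection arcs that do not correspond to flypes. This requires showing that the alternating condition on both $D_{1}$ and $D_{2}$, combined with the absence of nugatory crossings, forces the over/under information on the two sides of every minimal intersection arc to be consistent with a flype rather than with a more general tangle substitution. This compatibility is exactly what fails for non-alternating diagrams, and working it out carefully occupies the bulk of \cite{MenThis}.
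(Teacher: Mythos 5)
This theorem is not proved in the paper at all; it is the Tait flyping conjecture, quoted from Menasco--Thistlethwaite, so the only fair comparison is between your sketch and the actual argument of \cite{MenThis}. Your sketch correctly identifies the broad strategy (crossing balls, the punctured projection sphere, standard position, intersection analysis), but it has a genuine gap precisely where the theorem lives. The entire content of the flyping conjecture is the claim in your third step, that after minimization every residual intersection pattern between the two projection surfaces is accounted for by flypes; you assert that ``each irreducible intersection arc corresponds to a single flype'' and then concede that proving this ``occupies the bulk of \cite{MenThis}.'' That is not a proof with a hard lemma deferred --- it is the theorem restated. The actual argument is not an induction on the number of intersection arcs: Menasco and Thistlethwaite run a delicate double induction organized around auxiliary combinatorial objects (flype cycles and a taxonomy of how one diagram's crossing-ball structure can sit relative to the other's standard position), and nothing in your sketch indicates how a single arc ``forces'' agreement outside a tangle attached to one crossing.

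Two further points would need repair even at the level of the setup. First, the punctured projection sphere is not incompressible in the link exterior in the naive sense you invoke (a $2$-sphere in $S^3$ is compressible); Menasco's machinery concerns putting \emph{other} surfaces in standard position relative to the pair (projection sphere, crossing balls), and intersecting two such structures coming from two different diagrams requires first establishing a standard position of one relative to the other --- this is itself nontrivial and is where the alternating hypothesis enters. Second, your ``bijective correspondence of crossings'' between $D_1$ and $D_2$ presupposes that the two reduced alternating diagrams have the same crossing number (and the same writhe). This is an essential external input to the Menasco--Thistlethwaite proof, coming from the Kauffman--Murasugi--Thistlethwaite consequences of the Jones polynomial (Theorem 3.3 of the present paper), and it cannot be omitted: without it the induction has no well-defined complexity to decrease. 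As written, the proposal is a plausible outline of how one might begin, not a proof.
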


When we want to specify the distinguished crossing $p$,
we say that it is a flype {\em near} the crossing $p$.

We call the tangle $P$ of figure \reference{fig1} {\em flypable}.
We say that the crossing $p$ {\em admits a flype} or that {\em the
diagram admits a flype at (or near) $p$}.

We call the flype {\em non-trivial}, if both tangles $P$ and $Q$ have at
least two crossings.

We say that the crossing $p$ {\em admits} a (non-trivial) flype
if the diagram can be represented as in figure \reference{fig1} with
$p$ being the distinguished crossing (and both tangles having
at least two crossings). A diagram admits a (non-trivial) flype
if some crossing in it admits a (non-trivial) flype.

Since trivial flypes are of no interest we will assume from now on,
unless otherwise noted, that all flypes are non-trivial, without
mentioning this explicitly each time.
\end{defi}

 We call the move in (1) a $\bt$ move.
\begin{theorem}{\rm (\cite[theorem 3.1]{gen1})}\label{th1}
Reduced (that is, with no nugatory crossings)
alternating knot diagrams of given genus
decompose into finitely many equivalence classes
under flypes and direct and reversed applications
of $\bt$ moves.
\begin{figure}[htbp]
\includegraphics[scale=0.7]{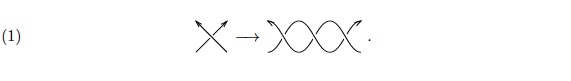}
\end{figure}\end{theorem}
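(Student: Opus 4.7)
The plan is to exploit the correspondence between reduced alternating diagrams and quadratic words (Wicks forms) established by Theorem 3.6. For a reduced alternating diagram $D$ of genus $g$, the formula $g(D)=(c(D)-s(D)+1)/2$ gives $c(D)=2g-1+s(D)$, so bounding the crossing number amounts to bounding the number of Seifert circles. At the level of quadratic words, the associated Wicks form has genus $g$, and the genus is preserved under the diagrammatic moves under consideration.

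First, I would analyze the $\bt$ move in both directions. The reverse $\bt$ move removes a bigon-like pair of crossings, which at the word level corresponds to deleting a subword of the form $xy^{\pm 1}x^{-1}$ (or a cyclic variant) — an operation that reduces the length of the quadratic word without changing the genus. Starting from any reduced alternating diagram of genus $g$ and applying reverse $\bt$ moves as long as possible must terminate, since $c(D)$ cannot drop below $2g+1$ for a non-trivial knot. The resulting diagram is $\bt$-minimal: it admits no further $\bt$-reductions. I would next argue that $\bt$-minimality at the diagram level corresponds precisely to the associated quadratic word being a maximal Wicks form (no cancellable pair, no subword that can be shortened while preserving the surface model).

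The next step is to invoke the classical finiteness result for maximal Wicks forms: the number of maximal Wicks forms of given genus $g$ is finite (in fact, the number of letters in such a form is bounded by a linear function of $g$, and the combinatorial types form a finite set). Combined with the previous step, this gives finitely many possibilities for the $\bt$-minimal diagram up to combinatorial equivalence. Now Theorem 4.2 of Menasco--Thistlethwaite tells us that alternating diagrams of the same knot are connected by flypes, and flypes preserve the Wicks form / underlying graph structure. Hence two $\bt$-minimal diagrams giving the same Wicks form differ by flypes, and so the equivalence classes under flypes and $\bt$ moves are in bijection with (a quotient of) the finite set of maximal Wicks forms of genus $g$.

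The main obstacle I anticipate is the second step: establishing that $\bt$-minimality of a reduced alternating diagram is equivalent to maximality of the corresponding Wicks form. This requires a careful local analysis showing that every way in which a Wicks form fails to be maximal — a cancelling pair of letters, or a short ``bridge'' between two letters — can be realized at the diagram level by a nugatory reduction (ruled out by reducedness) or by an inverse $\bt$ move. The remaining steps are essentially bookkeeping, but this correspondence is the technical heart of the argument and must be verified for each local configuration that can arise in an alternating diagram.
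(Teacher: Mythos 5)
The paper offers no proof of this statement: it is imported verbatim from \cite[Theorem 3.1]{gen1}, so there is no internal argument to compare yours against. Judged on its own, your outline has the right architecture --- push the flypes aside, $\bt$-reduce to an irreducible diagram, and bound the irreducible diagrams through the quadratic-word/graph correspondence and an Euler-characteristic count (this is essentially the route of \cite{gen1} and \cite{SV}, cf.\ Lemma \ref{lem23}). But the step you yourself identify as the technical heart is false as stated, and taken literally it breaks the argument.

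The claim that $\bt$-minimality of a reduced alternating diagram corresponds to the associated quadratic word being a (maximal) Wicks form is not correct. A reverse clasp consisting of exactly two crossings is \emph{not} removable by an inverse $\bt$ move (one needs three crossings in a twist to reduce), and it survives in a $\bt$-irreducible diagram as a Seifert circle of valence $2$. At the word level this is precisely a violation of condition (iii) in the definition of a Wicks form, i.e.\ the word is only an \emph{extended} Wicks form in the paper's terminology --- and if by ``maximal'' you mean maximal length ($12g-6$ letters, trivalent graph), the identification fails even more badly, since $\bt$-irreducible diagrams need not be trivalent. What $\bt$-irreducibility (after flypes) actually gives is that no two valence-$2$ vertices are adjacent, equivalently each neighbored equivalence class has at most $2$ crossings; combined with the bound of $6g-3$ on the number of such classes (which comes from the valence-$\ge 3$ count $2E\ge 3V$ together with $V-E=1-2g$ for the contracted graph), this bounds the crossing number of an irreducible diagram by roughly $2(6g-3)$ and yields finiteness. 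Two further points need care: flypes do \emph{not} in general preserve the Gau\ss{} diagram or the quadratic word (that is exactly why $\sim$-equivalence is defined modulo flypes), so your last step ``flypes preserve the Wicks form, hence irreducible diagrams with the same form differ by flypes'' is both unjustified and unnecessary --- for the finiteness statement one only needs that the irreducible diagrams form a finite set; and the reduction process must interleave flypes with $\bt$ moves (a triple of $\sim$-equivalent crossings may become a reducible twist only after a flype), which is the commutation observation from \cite{gen1} that all flypes can be pushed to the beginning of the sequence.
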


$\bt$-irreducible
diagram is a diagram where we cannot reduce the number of crossings using $\bt$-moves.
 
It was observed in \cite{gen1} that in a sequence of
flypes and $\bt$ moves, all the flypes can be performed in
the beginning. It follows then from Theorems \ref{TMT} and \ref{th1} that there
are only finitely many alternating knots with $\bt$-irreducible
diagrams of given genus $g$, and we call all such knots, and
their alternating diagrams {\em generators} or {\em generating}
knots/diagrams of genus $g$.

A {\em clasp} is a tangle made up of two crossings. According to the
orientation of the strands we distinguish between reverse and
parallel clasps.
\begin{figure}[htbp]
\includegraphics[scale=0.7]{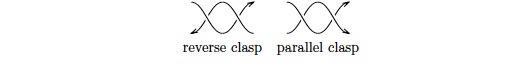}
\end{figure}
There is an obvious bijective correspondence between the
crossings of the 2 diagrams in figure \reference{fig1} before and after the flype,
and under this correspondence we can speak of what is
a specific crossing after the flype. In this sense, we 
make the following definition:

\begin{defi}
We call two crossings in a diagram $\sim$-{\em equivalent}, if they
can be made to form a reverse clasp after some (sequence of) flypes.
\end{defi}

Is is an easy exercise to check that $\sim$ is an equivalence
relation.

\begin{defi}
We call an alternating diagram {\em generating}, if each $\sim$-%
equivalence class of its crossings has $1$ or $2$ elements. The set of
diagrams which can be obtained by applying flypes and $\bt$ moves on a
generating diagram $D$ we call {\em generating series} of $D$.
\end{defi}

Thus theorem \reference{th1} says that alternating diagrams of given
genus decompose into finitely many generating series.

\begin{defi}\label{df1}
Let $c_g$ be the maximal crossing number of a generating diagram
of genus $g$, and $d_g$ the maximal number of $\sim$-equivalence
classes of such a diagram.
\end{defi}

\begin{theorem}\label{th2}[\cite{SV}]
The following holds:
\begin{enumerate}
\item $d_{g,o}=d_{g,e}=6g-3$ for $g>1$. That is, $a_{n,g}\sim n^{6g-4}$.
\item $c_g\ge 10g-7$.
\end{enumerate}
\end{theorem}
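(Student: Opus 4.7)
The plan is to translate the problem into the combinatorics of planar maximal Wicks forms of genus $g$, using the correspondence from \cite{SV} between generators (minimal members of each generating series) and such forms. Under this correspondence each $\sim$-equivalence class of crossings in a generator is identified with an edge of the trivalent ribbon graph $\Gamma$ underlying its Wicks form: a singleton class carries a single crossing, while a doubleton class carries a reverse clasp. Consequently $d_g$ equals the maximum number of edges $\Gamma$ can have as its Wicks form ranges over the planar maximal forms of genus $g$.

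The first step is an Euler-characteristic count. A maximal Wicks form of genus $g$ cuts the corresponding surface into a single polygon, so $F=1$; and $\Gamma$ is trivalent, so $3V=2E$. Plugging into $V-E+F=2-2g$ gives $V=4g-2$ and $E=6g-3$, and the same count applies in both of the sub-families distinguished by the subscripts $o$ and $e$ once the parities are matched to the genus. This yields $d_{g,o}=d_{g,e}=6g-3$ for $g>1$.

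The asymptotic $a_{n,g}\sim n^{6g-4}$ then follows from Theorem \ref{th1}. Every reduced alternating diagram of genus $g$ with $n$ crossings lies in the generating series of one of finitely many generators, and within a series a diagram is specified by nonnegative integer twist counts on the $\sim$-classes: the edge carrying twist $k\ge 1$ contributes $k$ crossings, so the crossing number is the sum of the twist parameters. With $6g-3$ such parameters constrained to sum to $n$, the number of lattice points is a quasi-polynomial of degree $6g-4$ in $n$; summing over the finite family of generators preserves this degree provided at least one generator contributes a nonzero leading coefficient, which is immediate from any explicit generator with $6g-3$ classes.

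For the inequality $c_g\ge 10g-7$, the strategy is constructive: for each $g>1$ exhibit a planar maximal Wicks form whose associated generator can be realised with at least $4g-4$ edges carrying reverse clasps and the remaining $2g+1$ edges carrying single crossings, giving $2(4g-4)+(2g+1)=10g-7$ crossings. A natural candidate is a chain of $g-1$ planar ``theta''-type ribbon blocks glued by single edges, inflated so that most edges become clasps. The main technical obstacle sits here: one must both build such a form explicitly in the plane and verify that the resulting diagram is genuinely alternating, reduced, and generating, so that no accidental flype or $\bt$-reduction collapses any $\sim$-class or trims the crossing count below $10g-7$.
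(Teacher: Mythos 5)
First, a point of orientation: the paper does not actually prove Theorem \ref{th2} --- it is quoted from \cite{SV} --- so your proposal can only be measured against the supporting machinery the paper does develop (Lemma \ref{lem23}, the Bieulerian-path construction of Section 5, and Theorem 6.3 on positive and negative vertices). Against that yardstick, the first half of your argument is sound: the Euler-characteristic count $F=1$, $3V=2E$, $V-E+F=2-2g$ giving $V=4g-2$, $E=6g-3$ is exactly the content of Lemma \ref{lem23} (with the inequality direction $2E\ge 3V$ for graphs of minimal valence $3$ supplying the upper bound $d_g\le 6g-3$, and equality forcing trivalence), and your lattice-point count for $a_{n,g}\sim n^{6g-4}$ is the standard consequence, modulo the routine observation that passing from diagrams to knots only divides by a finite symmetry group. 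But the statement $d_{g,o}=d_{g,e}=6g-3$ has an existence half that you dismiss in one clause (``once the parities are matched to the genus''): one must actually exhibit, for every $g>1$, planar $3$-connected trivalent graphs with Bieulerian paths whose associated diagrams realize both parities of crossing number. This is a genuine construction in \cite{SV} (the present paper leans on it explicitly in the proof of Theorem \ref{th3}: ``we know from \cite{SV} that for any crossing number parity, at least one such example exists''), and your proposal contains no substitute for it.

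The second and larger gap is in part (2), and you partly flag it yourself. Beyond the fact that no explicit family is built, your strategy rests on a false degree of freedom: you propose to take a planar maximal Wicks form and ``inflate'' it ``so that most edges become clasps.'' But in the correspondence of Section 5 the assignment of clasp versus single crossing to an edge is not a choice --- an edge receives a subdividing valence-$2$ vertex (hence a reverse clasp) exactly when its two endpoints carry the same induced orientation, and a single crossing otherwise. The counts of positive and negative vertices are themselves rigid ($2g-2$ and $2g$ by Theorem 6.3), so the number of clasp-bearing edges is dictated by the combinatorics of the chosen Wicks form, not by the prover. To reach $10g-7=2(4g-4)+(2g+1)$ crossings one must therefore produce, for each $g>1$, a planar $3$-connected cubic graph with a Bieulerian path having at least $4g-4$ same-orientation edges, and then still verify $3$-connectivity so that Theorem 5.5 guarantees the diagram admits no flypes and all $6g-3$ classes survive. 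Your ``chain of theta-blocks glued by single edges'' is in fact a poor candidate as stated: single-edge gluings create cut edges, destroying $3$-connectivity and (by the remark on composite diagrams in Section 5) capping the number of $\sim$-equivalence classes at $6g-6$. So part (2) is not merely unfinished; the sketched route would need to be replaced, not just completed.
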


It will be convenient, from now on, to consider only genus $g>1$.
The case $g=1$ is described in \cite{gen1}.

\begin{defi}
We say, that an alternating knot diagram is strongly prime, if it admits the maximal number
of $\sim$-equivalence
classes.
\end{defi}

\begin{defi}
A {\em standard} diagram {\bf D} of an alternating knot is a strongly 
prime diagram  all of whose Seifert circles
have either an empty interior or exterior and each of the Seifert circles of {\bf D}
has 2 or 3 adjacent crossings.(Here interior and
exterior denote the bounded and unbounded connected component of
the complement of the Seifert circle in $\R^2$ and empty means
not containing a crossing of the knot diagram.)
\end{defi}

\begin{defi}
A knot admitting a standard knot diagram is called standard knot. 
\end{defi}

We consider planar 3-connected 3-valent graphs (with no multiple
edges and loops). When equipping such a graph with a
Bieulerian path (whenever this is possible),
we associate to it a standard generating knot .

As a Bieulerian path endows
each vertex of such a graph with a cyclic orientation,
we have yet another appearance of, at least some, 3-valent
graphs from the theory of Vassiliev invariants \cite{Bar-Natan2}
in a different 
context, after Bar-Natan's remarkable paper \cite{Bar-Natan}.

A consequence of such a correspondence is that  standard alternating knots
dominate among alternating knots of given genus (higher than 1),
as the crossing number increases. The theorem below is a slight modification of
results of \cite{SV}, but we prove it at the end of Chapter 5 for the completeness of the paper.

\begin{theorem}\label{th3}  The family of standard alternating knots is generic in the family of all alternating knots, namely the ratio of the cardinality of the set
of standard alternating knots $K$ with $c(K)=n, g(K)=g,$ to the cardinality of the set of all alternating knots of the same genus and crossing number
approaches 1
as $n\to\infty$ for any fixed $g>1$.
\end{theorem}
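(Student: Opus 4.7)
The plan is to assemble the structural pieces already developed earlier in the paper and the identification provided by \cite{SV}. By Theorem \ref{th1}, every reduced alternating knot diagram of genus $g$ arises from one of finitely many generating diagrams $D_0$ by a sequence of flypes and $\bt$-moves. Inside a single generating series each $\bt$-move inserts a crossing into one $\sim$-equivalence class of the underlying generator, so if $D_0$ has $k$ $\sim$-classes the number of distinct alternating knots with exactly $n$ crossings obtained from $D_0$ is a polynomial in $n$ of degree $k-1$ (the number of ways to distribute $n-c(D_0)$ additional crossings among $k$ bins, divided by the order of the finite symmetry group of $D_0$).

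By Theorem \ref{th2}(1), the maximum of $k$ over all generators of genus $g>1$ equals $d_g = 6g-3$ and is achieved precisely by the \emph{strongly prime} generators (by definition). Summing over the finite set of generators,
\[
a_{n,g} \;=\; \lambda_g\, n^{6g-4} + O(n^{6g-5}),
\]
with $\lambda_g > 0$ depending only on the strongly prime generators of genus $g$. Every generator with $k \le 6g-4$ contributes only $O(n^{6g-5})$ and is therefore negligible as $n \to \infty$.

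The crucial input is the result of \cite{SV} cited just before the statement of Theorem \ref{th3}: strongly prime generators of genus $g$ are precisely the alternating diagrams arising from planar maximal Wicks forms, and every such diagram is standard in the sense of Definition 4.9. A $\bt$-move applied to a standard diagram produces another standard diagram, because the inserted reverse clasp contributes a new Seifert circle bounded by two crossings with empty interior, while leaving the rest of the Seifert incidence structure and the strong primeness intact. Inductively, every knot in the generating series of a strongly prime generator is standard. Letting $S(n,g)$ denote the number of standard alternating knots with $c(K) = n$ and $g(K) = g$, the same calculation as above therefore gives $S(n,g) = \lambda_g\, n^{6g-4} + O(n^{6g-5})$ with the \emph{same} leading coefficient, and hence $S(n,g)/a_{n,g} \to 1$.

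The main obstacle is the identification (strongly prime generator)\,$=$\,(standard generator) at the level of $\bt$-irreducible diagrams. This is not a routine check: one must show that whenever the number of $\sim$-equivalence classes attains the combinatorial maximum $6g-3$, the Seifert circles of the resulting planar diagram are forced to have either empty interior or exterior and exactly $2$ or $3$ adjacent crossings. This fact is proved through the maximal Wicks-form correspondence of \cite{SV} and is used here as a black box; the remainder of the argument is the elementary leading-term accounting above.
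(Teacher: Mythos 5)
Your argument is correct and follows essentially the same route as the paper: reduce to generating series, observe that a series built on a generator with $k$ $\sim$-equivalence classes contributes $O(n^{k-1})$ knots, and invoke the results of \cite{SV} (Lemma \reference{lem23} together with the trivalent-graph/Wicks-form correspondence) to identify the generators attaining the maximum $k=6g-3$ with the standard ones. The only point you leave implicit, and which the paper states explicitly, is that since each $\bt$ move changes the crossing number by $2$, one needs strongly prime (hence standard) generators of \emph{both} crossing-number parities to guarantee $\lambda_g>0$ for every residue of $n$ modulo $2$; this is precisely the ``$d_{g,o}=d_{g,e}=6g-3$'' statement of Theorem \reference{th2} quoted from \cite{SV}.
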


In \cite{VirPol} the concept of Gau\ss{} diagrams was introduced
as a tool for generating knot invariants. Given a knot
diagram, one links by a chord on a circle the preimages
of the two passes of each crossing, orienting the chord
from the underpass to the overpass. The resulting object is
called a {\em Gau\ss{} diagram} (GD).

In general any circle with oriented chords is called
a Gau\ss{} diagram. Not all Gau\ss{} diagrams come from knot
diagrams; those that do are called {\em realizable} Gau\ss{} diagrams.
We ignore in the sequel the sign of the crossings, that is, the
direction of the arrows. Then realizable Gau\ss{} diagrams correspond
bijectively to alternating knot diagrams up to mirroring.
It was notices in \cite{STV} that the Gau\ss{} diagram of a
generating diagram has no triple of chords, not intersecting
each other, and intersecting the same subset of
the remaining chords.

\section{Standard alternating knots and 3-valent graphs\label{IMGD}}

Let $G$ be a connected 3-valent graph. Fix some arbitrary orientation
(direction) of the edges in $G$.
A {\em Bieulerian path} in $G$ is a closed path that traverses each
edge of $G$ exactly twice, only once in each direction, and does not
traverse any edge followed immediately by its inverse
(itself in the opposite direction).

To a Bieulerian path one can associate a word in some alphabet
(called {\em Wicks form} and considered in more detail later),
obtained by labeling each edge by a letter,
and putting this letter (resp. its inverse) when the edge is
traversed in (resp.\ oppositely to) its orientation.


In \cite{STV} we described the bijection between a 
graph with Bieulerian path $G$ and a Gau\ss{} diagram $G'$, as the following.

To obtain $G'$ from $G$ one just writes the letters of its
word (Wicks form) $w$ along a circle and links by a chord each letter
and its inverse. To obtain $G$ from $G'$, we consider the
circle of $G'$ as a $2n$-gon (each side corresponding to
a basepoint of a chord) and identify sides corresponding to
the basepoints of the same chord, obtaining $G$ lying on
a surface $S$. (The circle $G'$ bounds a disk that yields
$S$ under the identifications.) To indicate the origin of $G$ and $S$,
we write $G=G(w)$ and $S=S(w)$. The dual of $G$ forms
a 1-vertex triangulation of $S$.

We call a graph with a Bieulerian path {\em realizable} if and only if
its associated Gau\ss{} diagram is realizable (as a knot diagram).
In this case each Seifert circle of the knot diagram corresponds to
a vertex of the graph, and each crossing of the knot diagram
attached to a pair of Seifert circles
corresponds to an edge joining the vertices of these Seifert circles.
In this sense we call the number of crossings attached a
Seifert circle its {\em valence} (the
valence of its corresponding vertex in the graph).

Then in \cite{STV} we defined the genus of Gau\ss{} diagrams
and of graphs in different ways and showed that they
coincide. Also the genus of a knot diagram (which is
equal for alternating diagrams to the genus of the knot
\cite{Crowell,Murasugi}) was shown to
be equal to the genus of its Gau\ss{} diagram.

It is easy to see that composite knot diagrams give composite
Gau\ss{} diagrams, which in turn correspond to graphs with a cut
vertex. Since genus is additive under the join of graphs
\begin{figure}[htbp]
\includegraphics[scale=0.7]{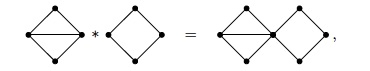}
\end{figure}

as mentioned, a composite genus $g$ knot diagram can have at most $6g-6$
$\sim$-equivalence classes. Thus the contribution of such
diagrams is negligible, once we have shown that there are
diagrams with more $\sim$-equivalence classes (see the proof
of theorem \reference{th2}).

\begin{defi}
A primitive Conway tangle \cite{Conway} is a tangle of the form
\begin{figure}[htbp]
\includegraphics[scale=0.7]{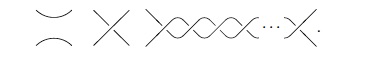}
\end{figure}

We call two crossings $a$ and $b$ in a diagram $D$ {\em neighbored}, if
they belong to a reversely oriented primitive Conway tangle in $D$, that
is, there are crossings $c_1,\dots,c_n$ with $a=c_1$ and $b=c_n$,
such that $c_i$ and $c_{i+1}$ form a reverse clasp in $D$.
(Equivalently, $a$ and $b$ correspond in the graph to edges
which can be connected by a path passing only through vertices
of valence 2.)
\end{defi}

This is a similar definition to $\sim$-equivalence, but with no flypes
allowed. Thus the number of $\sim$-equivalence classes of a diagram is
not more than the number of neighbored equivalence classes of
the same diagram, or of any flyped version of it.

The following was proved in \cite{SV}.

\begin{lem}\label{lem23}
A knot diagram of genus $g$ has at most $6g-3$ neighbored equivalence
classes (and hence at most $6g-3$ $\sim$-equivalence classes).

Moreover, knot diagrams of genus $g$ having exactly $6g-3$ neighbored
equivalence classes come exactly
from graphs with Bieulerian path, all whose vertices have valence
2 or 3.
\end{lem}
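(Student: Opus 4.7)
The plan is to translate the combinatorial statement about neighbored equivalence classes into an Euler-characteristic estimate on an auxiliary graph. Given a reduced knot diagram $D$ of genus $g$, pass to its associated graph $G$ embedded on the orientable surface $S$ of genus $g$ (Seifert circles become vertices, crossings become edges). The Wicks-form construction --- identifying the $2n$ boundary arcs of a disk in pairs --- ensures that the embedding $G \hookrightarrow S$ has exactly one face, so Euler's formula gives $V(G) - E(G) + 1 = 2-2g$. Because $D$ is reduced, every vertex of $G$ has valence at least $2$.

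Next I would form the auxiliary graph $G''$ obtained from $G$ by suppressing each valence-$2$ vertex, i.e.\ replacing every maximal chain of valence-$2$ vertices and its two terminal edges by a single edge. By the definition of \emph{neighbored}, the edges of $G''$ are in canonical bijection with the neighbored equivalence classes of crossings of $D$. Suppression does not alter the face count, so $G''$ still embeds in $S$ with a single face and every vertex of $G''$ has valence $\ge 3$. Euler gives
\[
V''-E''+1=2-2g,\qquad\text{hence}\qquad E''=V''+2g-1.
\]
The handshake lemma combined with minimum degree $3$ yields $3V''\le 2E''$, hence $V''\le \tfrac{2}{3}E''$. Substituting,
\[
E''\le \tfrac{2}{3}E''+2g-1,\qquad\text{i.e.,}\qquad E''\le 6g-3,
\]
with equality iff every vertex of $G''$ has degree exactly $3$ --- equivalently, every vertex of $G$ has valence $2$ or $3$. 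This gives both the bound and its equality case for neighbored classes. The $\sim$-equivalence bound then follows immediately from the remark already recorded in the excerpt, that $\sim$-classes are coarser than the neighbored classes of any flyped representative, and flypes preserve genus.

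The main technical obstacle, and the step I would treat carefully, is verifying that the embedding inherited by $G''$ in $S$ is genuinely cellular with exactly one face. This requires handling the delicate special cases where a chain of valence-$2$ vertices forms a closed loop, or where both endpoints of such a chain attach to the same valence-$\ge 3$ vertex. In each such case one checks by hand that contraction produces a loop at the appropriate vertex (contributing $2$ to its $G''$-valence) without generating any extra face, so that the Euler computation and the handshake inequality above remain valid and sharp in precisely the stated equality regime.
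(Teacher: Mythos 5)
Your argument is correct and is essentially the proof given in the cited source \cite{SV} (the paper itself only quotes this lemma from there): one passes to the Seifert/Wicks-form graph embedded with a single face on the genus-$g$ surface, suppresses the valence-$2$ vertices so that edges of the resulting graph count neighbored classes, and combines Euler's formula $E''=V''+2g-1$ with the handshake bound $3V''\le 2E''$, with equality exactly in the valence-$\{2,3\}$ case. Your closing caveats (reducedness to exclude valence-$1$ vertices, loops created by suppression, invariance of genus and of the one-face property under the operation) are the right ones and are handled as you describe.
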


The lemma means in particular, that if $G'$ is realizable and
its knot diagram $D$ has $6g-3$ $\sim$-equivalence
(or just neighbored equivalence)
classes, then all vertices of $G'$ have valence $2$ or $3$, and thus the
Seifert circles of $D$ have $2$ or $3$ adjacent crossings.
Hence the knot diagram is standard.

In general the condition of being realizable is difficult to test
for $G'$, but in the trivalent case it is surprisingly simple.

\begin{theorem}[\cite{SV}]
A trivalent graph with Bieulerian path is realizable if and only if
it is planar(ly embeddable). In this case the knot diagram is standard.
\end{theorem}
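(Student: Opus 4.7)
The plan is to prove the two directions separately, and to deduce the standardness statement from the construction together with Lemma~\ref{lem23}.

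For the forward direction (realizability implies planarity), I would start from an actual plane knot diagram $D$ whose Gauss diagram is the given one. The Seifert circles of $D$ are pairwise disjoint simple closed curves in the plane, and each crossing connects two of them. Collapsing each Seifert circle to a point and each crossing to an edge produces an embedding of $G$ in the $2$-sphere, which is the required planar embedding.

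For the reverse direction (planarity implies realizability), I would fix a planar embedding of $G$; for a $3$-connected $3$-valent graph this embedding is unique up to reflection by Whitney's theorem. I would thicken each vertex to a small disk (the prospective Seifert circle) and each edge to a narrow band. The Bieulerian path prescribes a cyclic pairing of the six half-edge incidences at each $3$-valent vertex into three passes; at such a vertex the only cyclic structure consistent with traversing every edge exactly once in each direction is the one induced by the planar cyclic order of edges around the vertex, up to a global reflection. Placing a crossing inside each band and imposing the alternating convention then produces a plane knot diagram whose associated Gauss diagram agrees with the given one.

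The main obstacle will be checking this compatibility globally: locally at a single vertex the rotation system coming from the Bieulerian path is forced up to a local reflection, and one must verify that the choices made at different vertices can be assembled coherently. This is where $3$-connectedness enters via Whitney's theorem, pinning down the planar embedding uniquely, so if the Bieulerian rotation system is planar at every vertex it must globally coincide with the unique planar one.

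Once realizability is established, standardness follows almost directly from the construction. Trivalence gives each Seifert circle exactly $3$ adjacent crossings, within the allowed range of Definition 4.9; planarity places all edges incident to a given vertex in a small neighborhood of that vertex, so every Seifert circle has empty interior or empty exterior; and the strongly-prime condition is the content of Lemma~\ref{lem23}, which identifies diagrams attaining the maximum $6g-3$ of neighbored equivalence classes with those coming from graphs with Bieulerian path whose vertices have valence $2$ or $3$.
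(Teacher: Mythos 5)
Your forward direction (realizable $\Rightarrow$ planar, by collapsing the Seifert circles of a realizing plane diagram) is the standard argument and is fine. The reverse direction, however, rests on a false claim: that the cyclic order of the three edges at a vertex induced by the Bieulerian path must coincide, up to a global reflection, with the cyclic order coming from the planar embedding. It cannot. The rotation system induced by the Bieulerian path is precisely the one for which the embedding of $G$ has a single face (the $2n$-gon whose boundary word is the Wicks form), hence it is the rotation system of the embedding of $G$ in the genus-$g$ surface $S(w)$; the planar rotation system instead yields $2g+1$ faces. For $g>1$ these are genuinely different rotation systems, so at many vertices the Bieulerian cyclic order disagrees with the planar one, and there is nothing for Whitney's theorem to ``pin down.'' (Relatedly, Theorem 6.4 records that a genus-$g$ cubic Wicks form has $2g-2$ positive and $2g$ negative vertices, i.e.\ both local types occur; if your claim held, the path would simply bound a face of the planar embedding.)

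Because of this, your assembly step skips the real difficulty, which is orientation compatibility: to run the inverse Seifert algorithm, the two Seifert circles meeting at a crossing must be oriented so that the two smoothed strands are parallel there. The construction in the paper handles this by reading off the vertex orientations from the Bieulerian path, inserting a degree-2 vertex on every edge whose two endpoints have the same orientation, re-orienting so that every vertex is of ``first type'' (all edges outgoing) or ``second type'' (all edges incoming), drawing tangent oriented circles around the vertices, and letting the first/second type dichotomy dictate over- versus under-crossings. Your recipe of ``placing a crossing inside each band and imposing the alternating convention'' fails exactly on edges joining two vertices of the same sign. A secondary gap: you invoke Lemma~\ref{lem23} for strong primality, but that lemma only gives the upper bound $6g-3$ and says which graphs can attain it; that the constructed diagram actually attains $6g-3$ $\sim$-equivalence classes is the content of Theorem 5.5 and requires $3$-connectedness of $G$, which your argument never uses for this purpose.
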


We should remark that a planar graph is in fact a graph equipped with
a concrete planar embedding, while the realizability of the graph
does not depend on the planar embedding. However, we will shortly
show that for the cases we need to consider the planar embedding
is unique (see remark \reference{remb}).

For the proof, and later, we will need the following additional
structure on a trivalent graph with Bieulerian path.

\begin{defi}
A Bieulerian path in a trivalent graph
induces an orientation on each 3-valent vertex $v$ given by a
cyclic order of the 3 adjacent edges. To define it, orient
the 3 adjacent edges $a$, $b$ and $c$ towards $v$. Then
if the word of the Bieulerian path contains the subwords $ab^{-1}$,
$bc^{-1}$ and $ca^{-1}$ (in whatever order), then the orientation at
$v$ is given by $(a,b,c)$.
\begin{figure}[htbp]
\includegraphics[scale=0.5]{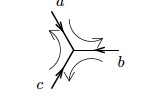}
\end{figure}

If the Bieulerian path contains the subwords $ac^{-1}$,
$cb^{-1}$ and $ba^{-1}$ (in whatever order), then the orientation at
$v$ is $(c,b,a)$.
\begin{figure}[htbp]
\includegraphics[scale=0.5]{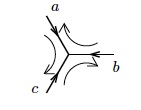}
\end{figure}

\end{defi}

Now we establish
a natural correspondence between a planar $3$-valent graph with
Bieulerian path and a standard knot diagram.

We give an explicit construction of a standard knot diagram using a 
planar $3$-valent graph with Bieulerian path.

Let $G$ be a 3-valent graph with Bieulerian path.
The path induces the orientation of vertices. If two ends
of the edge have the same orientation, put on the edge an additional
vertex of degree two. We have a graph $G^{\prime}$ with vertices
of degree two and three. Every edge $x$ of $G$, which was
divided in two parts, will be replaced in the Bieulerian path
by $x_1x_2$. We can change the orientations of the
edges of $G^{\prime}$ such that in the Bieulerian path
the orientations of edges alternate. Now we have an oriented
graph such that for every vertex all edges incident to it
either all are incoming or all are outgoing.
 
If all edges incident to a vertex
 all are outgoing (incoming) we say, that the vertex is of
 the first (second) type.

In the middle of any edge of $G^{\prime}$ we put a small cross,
it will be a future crossing of the knot diagram. Now we draw a circle
with the center in each vertex, such that the circles
with centers in the ends of the same edge are tangent
at the small cross. We equip each circle with the orientation
induced by the orientation of the vertex.
These circles will be the Seifert circles for our knot diagram.

Now we form the knot diagram from the Seifert circles by an
algorithm, which is inverse to the Seifert algorithm.
Overcrossings and undercrossings
are defined as follows: if the knot strand goes from a
vertex of the first type to a vertex of the second type, we have an
overcrossing; if the strand goes from a vertex of the second type to
a vertex of the first type, we have an undercrossing.

Note, that even after inserting vertices of valence 2, the graph
has no edge connecting different vertices of valence 2, and thus
the resulting knot diagram has not more than two neighbored crossings
in each neighbored equivalence class.

Planar 3-valent graphs of genus $g$ with Bieulerian path such that
the corresponding diagram has $6g-3$ $\sim$-equivalence classes
 are described in the following theorem.

\begin{theorem}[\cite{SV}]
Let $G$ be a planar 3-valent graph (with Bieulerian path) and $D$ its
knot diagram (as constructed earlier).
Then the following conditions are equivalent:
\begin{enumerate}
\item $G$ is 3-connected (i.e., removing any pair of edges
does not disconnect it),
\item $D$ has $6g-3$ $\sim$-equivalence classes,
\item $D$ admits no (non-trivial) flypes.
\end{enumerate}
\end{theorem}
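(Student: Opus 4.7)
The plan is to prove the cyclic chain of implications (1)$\Rightarrow$(3)$\Rightarrow$(2)$\Rightarrow$(1), with the backbone being a single counting identity. Because the Bieulerian path on the 3-valent graph $G$ produces a one-face embedding on the genus-$g$ surface $S(w)$, Euler's formula together with $3|V(G)| = 2|E(G)|$ gives $|V(G)|=4g-2$ and $|E(G)|=6g-3$. Under the construction of $D$, each edge of $G$ becomes either one crossing of $D$ or a pair of crossings joined through a valence-2 vertex of $G'$ (forming a reverse clasp); hence the neighbored equivalence classes of $D$ correspond bijectively to edges of $G$, giving exactly $6g-3$ such classes. Since $\sim$-equivalence is the closure of neighbored equivalence under flypes, the number of $\sim$-classes is at most $6g-3$, with equality precisely when no flype identifies two distinct neighbored classes.

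With this in hand, (3)$\Rightarrow$(2) is essentially tautological: if $D$ admits no non-trivial flype, then $\sim$-equivalence reduces to neighbored equivalence on $D$, which has $6g-3$ classes. The implications (1)$\Rightarrow$(3) and (2)$\Rightarrow$(1) rest on the following dictionary: non-trivial flypes of $D$ correspond to 2-edge-cuts of $G$ whose two sides each contain at least two further edges. Indeed, by the Menasco–Thistlethwaite picture a flype is determined by a Conway 2-sphere bounding the tangle $P \cup \{p\}$ and meeting the knot in 4 points; translating to Seifert circles, the sphere cuts through at most two Seifert circles transversally, which in $G$ corresponds to a pair of edges whose removal separates the graph. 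Non-triviality of the flype (both $P$ and $Q$ containing at least two crossings) forces both components of the cut to retain at least two edges.

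Given the dictionary, (1)$\Rightarrow$(3) is immediate: 3-edge-connectedness precludes any 2-edge-cut and hence any non-trivial flype. For (2)$\Rightarrow$(1) I would argue contrapositively. A proper 2-edge-cut of $G$ yields a non-trivial flype of $D$; this flype makes two crossings on opposite sides of the flyped tangle reverse-clasp equivalent, merging them into a single $\sim$-class, but those two crossings sit in distinct neighbored classes in $D$ because they are separated by the internal structure of the tangle $P$. Hence the $\sim$-class count drops strictly below $6g-3$, contradicting (2).

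The main obstacle is making the flype/2-edge-cut dictionary precise. The Menasco–Thistlethwaite characterization of flypes is stated in terms of Conway 2-spheres in the plane of $D$, whereas $G$ is built from Seifert circles; the translation requires tracking how the 4 boundary strands of the tangle $P$ pair up among Seifert circles, and verifying that the non-triviality condition on $P$ and $Q$ matches properness of the 2-edge-cut (the side carrying the distinguished crossing $p$ retaining $\geq 2$ of its own edges, and the other side doing the same). Orientation data coming from the Bieulerian path must also be handled so that the constructed flype is genuine rather than trivial. Once this translation is in place, all three equivalences follow cleanly from the Euler count.
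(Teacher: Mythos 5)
First, a caveat: the paper does not prove this theorem itself --- it is quoted from \cite{SV} without an argument --- so there is no in-paper proof to compare against. Judged on its own terms, your architecture is the right one and, as far as the statement's provenance goes, it is the standard route: the Euler count $|V|=4g-2$, $|E|=6g-3$ for a one-face trivalent embedding, the identification of neighbored classes of $D$ with edges of $G$ (using that the subdivision inserts no edge joining two valence-2 vertices), and the reduction of everything to a dictionary between non-trivial flypes of $D$ and 2-edge-cuts of $G$. One small simplification you missed: for a simple cubic graph every 2-edge-cut is automatically ``proper'' (a side consisting of a single trivalent vertex would emit three boundary edges), so the fussing over whether each side of the cut retains two edges, and hence over non-triviality of the induced flype, takes care of itself.

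The genuine gap is that the flype/2-edge-cut dictionary --- which you correctly identify as ``the main obstacle'' --- is asserted, not proven, and it is essentially the entire content of the theorem: both (1)$\Rightarrow$(3) and (2)$\Rightarrow$(1) rest on it, and without it the only thing actually established is the count of neighbored classes (which is already Lemma \reference{lem23}). In particular, your sketch of the translation is not yet sound as stated: a Conway sphere meeting the knot in 4 points need not meet exactly two Seifert circles twice each (the points could all lie on one Seifert circle), and the cut pair of edges must be located precisely --- in the intended correspondence one of the two cut edges is the edge $e_p$ of the distinguished crossing $p$ itself, which is what makes $p$ flypable to form a reverse clasp with the crossing of the other cut edge. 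Relatedly, in (3)$\Rightarrow$(2) you need a sentence justifying that trivial flypes (where $P$ or $Q$ has at most one crossing) cannot merge two neighbored classes, since $\sim$-equivalence quantifies over all sequences of flypes, not only non-trivial ones. Until the dictionary is established in both directions (a 2-edge-cut produces a genuine flype respecting the orientation data of the Bieulerian path, and conversely every non-trivial flype exhibits a 2-edge-cut), the proposal is an outline rather than a proof.
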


\begin{rem}\label{remb}
By a theorem of Whitney each 3-valent 3-connected graph
has, if any, a unique planar embedding up to moves in $S^2$
(see \cite{Bar-Natan}). Thus for the cases that are of interest
to us we do not need to care about ambiguities of the planar
embedding, and can consider the graph also abstractly.
\end{rem}

\begin{corr}[\cite{SV}]
There is a bijective correspondence between genus $g$ diagrams
with $6g-3$ $\sim$-equivalence classes and planar 3-connected 3-valent
graphs with Bieulerian paths (considered up to moves in $S^2$ on the
graph and cyclic permutations of the path). 
\end{corr}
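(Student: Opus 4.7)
The plan is to construct explicit maps in both directions between these two sets and verify that they are mutually inverse modulo the stated equivalences. Most of the heavy lifting has already been done in the preceding results, so the main work is to assemble them and check compatibility.

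In one direction, the forward map is already defined by the explicit construction given just before Theorem~5.4: starting from a planar $3$-valent graph $G$ with Bieulerian path, insert degree-$2$ vertices to obtain a graph $G'$ whose edge orientations alternate along the path, draw a Seifert circle centered at each vertex of $G'$ with tangencies at the edge-midpoint crossings, and reconstruct the diagram via the inverse Seifert algorithm using the first-type/second-type rule for over- and undercrossings. By the preceding theorem (the one immediately above Remark~\reference{remb}), $3$-connectedness of $G$ is equivalent to the resulting diagram $D$ having $6g-3$ $\sim$-equivalence classes. So this map sends the source set into the target set.

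In the other direction, given a genus $g$ knot diagram $D$ with $6g-3$ $\sim$-equivalence classes, I would build the Seifert graph $G_D$ whose vertices are the Seifert circles of $D$ and whose edges are the crossings of $D$ (each crossing joining the two Seifert circles it connects). By Lemma~\reference{lem23}, every vertex of $G_D$ has valence $2$ or $3$, and by the theorem above the corollary the resulting trivalent graph (obtained by smoothing out degree-$2$ vertices, or equivalently contracting the edges joining pairs of neighbored crossings) is $3$-connected. Planarity comes for free because $D$ lives in $\R^2$, so the Seifert circles and the crossing-edges are embedded in the plane. The Bieulerian path is read off directly from traversing the knot itself: each crossing is visited twice, once on each strand, giving each edge two opposite traversals, and the local rotation at each $3$-valent vertex encodes whether that Seifert circle is traversed by the knot in one rotational sense or the other (this is precisely how the two vertex-type cases in the definition of the induced orientation are realized).

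To close the loop, I would check that the two compositions are identities on the respective equivalence classes. Applying the forward construction to the graph produced from $D$ reproduces the same Seifert circles (up to isotopy in the plane) and the same pattern of over/undercrossings, because the first/second type distinction in the graph is exactly the data of whether the knot enters or leaves the Seifert circle at each crossing. Conversely, starting from $(G,w)$, drawing the diagram and then extracting its Seifert graph recovers $G'$, and contracting the inserted degree-$2$ vertices gives back $G$; the Bieulerian path is recovered from the knot traversal up to the starting point, i.e.\ up to cyclic permutation of $w$. Finally, by Whitney's theorem (Remark~\reference{remb}), the planar embedding of a $3$-connected $3$-valent graph is unique up to moves in $S^2$, so the ambiguity in embedding matches the equivalence quotient on the graph side. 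The main obstacle, and the only point requiring a bit of care, is checking that the cyclic orientation at $3$-valent vertices induced by $w$ matches the rotational orientation with which the knot traverses the corresponding Seifert circle; this is a local check at a single vertex, but it is what guarantees that the graph-to-diagram and diagram-to-graph procedures are genuinely inverse rather than merely inverse up to a global reflection.
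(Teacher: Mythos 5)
The paper itself offers no proof of this corollary (it is imported from \cite{SV}), but your derivation --- the explicit graph-to-diagram construction for the forward map, the ``moreover'' clause of Lemma 5.2 to get back from a diagram with $6g-3$ classes to a graph with vertices of valence $2$ and $3$, Theorem 5.5 to translate $3$-connectedness into the $6g-3$ count, and Whitney's theorem to match the embedding ambiguity with the $S^2$-move quotient --- is exactly the intended assembly of the preceding results and is correct. The one point worth stating explicitly is that the diagrams on the left-hand side must be understood as generating diagrams, i.e.\ with each $\sim$-class realized by at most two (neighbored) crossings; this is what the forward construction produces (it inserts at most one valence-$2$ vertex per edge), and without this normalization a diagram with a longer reverse chain in some class would not be recovered by the round trip you describe.
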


{\bf Proof of Theorem 4.11.} 

We put together the previous results.
Clearly, we need to consider only genus $g$ generators $D$ of standard knots, since they have the maximal number of
$\sim$-equivalence classes. By lemma 5.2, this maximal number is $6g - 3$, and generators with that many $\sim$-equivalence classes have graphs with vertices of valence 2 and 3. By theorem 5.3 the diagrams of such graphs are standard, and we know from \cite{SV} that for any crossing number parity, at least one such example exists. Finally, from Part 3 of theorem 5.5 we know that diagrams in the series of D have only symmetries coming from the Bieulerian path, and the order of such a symmetry is at most 6, see \cite{BacherVdovina}.

\section{Connection with Wicks forms}

 An {\it oriented Wicks form\/} is a cyclic word $w= w_1w_2\dots w_{2l}$
 (a cyclic word is the orbit of a linear word under cyclic permutations)
 in some alphabet $a_1^{\pm 1},a_2^{\pm 1},\dots$ of letters
 $a_1,a_2,\dots$ and their inverses $a_1^{-1},a_2^{-1},\dots$, such that
\begin{itemize}
\item[(i)] if $a_i^\epsilon$ appears in $w$ (for $\epsilon\in\{\pm 1\}$)
 then $a_i^{-\epsilon}$ appears exactly once in $w$,
\item[(ii)] the word $w$ contains no cyclic factor (subword of
 cyclically consecutive letters in $w$) of the form $a_i a_i^{-1}$ or
 $a_i^{-1}a_i$ (no cancellation),
\item[(iii)] if $a_i^\epsilon a_j^\delta$ is a cyclic factor of $w$ then
 $a_j^{-\delta}a_i^{-\epsilon}$ is not a cyclic factor of $w$ 
(substitutions of the form $a_i^\epsilon a_j^\delta\longmapsto x,
 \enspace a_j^{-\delta}a_i^{-\epsilon}\longmapsto x^{-1}$ are
 impossible).
\end{itemize}

 An oriented Wicks form $w=w_1w_2\dots$ in the alphabet $A$ is
 {\em isomorphic\/} to $w'=w'_1w'_2\dots$ in an alphabet $A'$ if
 there exists a bijection $\varphi:A\longrightarrow A'$ with
 $\varphi(a^{-1})=\varphi(a)^{-1}$ such that $w'$ and
 $\varphi(w)=\varphi(w_1)\varphi(w_2)\dots$ define the
 same cyclic word.

 The {\em genus\/} $g_t(w)$ of an oriented Wicks
 form $w=w_1\dots w_{2l-1}w_{2l}$ is defined as the topological
 genus of the oriented compact connected surface $S(w)$ obtained
 as described in Section 5.
 Knots diagrams give rise to Wicks forms in the following way.
 
Knots (smooth embeddings of ${\mathbb S}^1$ to ${\mathbb R}^3$) are usually presented
by knot diagrams that are generic immersions of ${\mathbb S^1}$ to 
${\mathbb R}^2$-plane enhanced by information of over-passes and under-passes at the double points. To correspond a Wicks form to a knot diagram $D$ one assigns a letter 
to each double point of the immersion, and the preimages of each double point are denoted by this letter with opposite exponents, 1 and -1.

It was shown in \cite{STV} that for  alternating knots the genus of a knot and the genus of the corresponding Wicks form coniside.

Let $G$ be a cubic (3-valent) connected graph on $4g-2$ vertices and
the word $U$ is one of its Bieulerian paths. We will call them {\em cubic Wicks forms}.
Note that a Bieulerian path can be presented as an oriented Wicks form of genus $g$.

\begin{defi} Wicks forms, which came from Bieulerian paths
of 3-connected planar cubic graphs
on $4g-2$ vertices will be called
{\em planar Wicks forms}. 
\end{defi}

These forms are also maximal in the
sense of \cite{BacherVdovina}.

\begin{defi}
A vertex $V$ (with oriented edges $a,b,c$ pointing toward $V$) in a cubic
Wicks form $w$ is 
{\it positive} if
$$w=ab^{-1}\dots bc^{-1}\dots ca^{-1}\dots \quad {\rm or }\quad 
w=ac^{-1}\dots cb^{-1}\dots ba^{-1}\dots $$
and $V$ is {\it negative} if    
$$w=ab^{-1}\dots ca^{-1}\dots bc^{-1}\dots \quad {\rm or }\quad 
w=ac^{-1}\dots ba^{-1}\dots ab^{-1}\dots \quad 
.$$

\end{defi}

\begin{theorem}(\cite{BacherVdovina})
The number of positive vertices in a genus $g$ cubic Wicks form is $2g-2$, and the number of negative vertices is $2g$.

\end{theorem}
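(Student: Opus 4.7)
Since the underlying cubic graph has $4g-2$ vertices, we have $p+q=4g-2$, where $p$ denotes the number of positive vertices and $q$ the number of negative ones. It therefore suffices to establish the identity $q-p=2$, which is independent of $g$.

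I would first make precise the topological meaning of the positive/negative dichotomy on the surface $S(w)$. At each vertex $V$, the Bieulerian path induces both a rotation at $V$ (the cyclic order of the three incident edges $a,b,c$, read off from the three subwords $xy^{-1}$ present at $V$) and a cyclic order on the three corners (the occurrences of those subwords in the cyclic word $w$). A vertex is positive precisely when these two cyclic orders agree and negative when they disagree. Viewing the Bieulerian path as the oriented boundary of the $(12g-6)$-gon whose gluing forms $S(w)$, this dichotomy records whether, at $V$, the curve winds in the direction of the rotation system (positive) or against it (negative) relative to the orientation of $S(w)$.

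The heart of the argument is then a Poincar\'e--Hopf type identity. One constructs a tangent vector field, or singular foliation, on $S(w)$ whose isolated zeros lie exactly at the vertices of $G$, with the local index at a zero depending only on whether the vertex is positive or negative; summing indices one obtains $\chi(S(w))=2-2g$, which combined with $p+q=4g-2$ determines $p$ and $q$. An equivalent combinatorial alternative is an induction on $g$: perform a surgery on $w$ (for instance, excising a suitable block of corners so as to collapse a handle) that lowers the genus by one while removing equal numbers of positive and negative vertices, and verify a small base case such as $g=2$ on an explicit cubic Wicks form, e.g.\ one arising from the $3$-prism.

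The main obstacle is calibrating the indices (or the surgery) so that the $g$-dependence cancels and one obtains the sharp identity $q-p=2$ rather than some multiple of $g$. A naive $\pm 1$ weighting yields $p-q=\chi(S(w))=2-2g$, which is $g$-dependent and hence incorrect. The correct framework must absorb this $g$-dependence, most naturally via the contribution of the unique face of $G$ on $S(w)$ (a topological disk of Euler characteristic $+1$, naturally responsible for the additive constant $+2$ by which $q$ exceeds $p$); pinning this boundary contribution down precisely is the technical core of the proof.
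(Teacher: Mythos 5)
The paper does not prove this statement; it quotes it from \cite{BacherVdovina}, so your attempt has to stand entirely on its own. Your opening reduction is correct and worth keeping: a cubic Wicks form of genus $g$ lives on a graph with $4g-2$ vertices, every vertex is positive or negative (the two cases in the definition exhaust the two cyclic orders of three edges), hence $p+q=4g-2$ and the whole content of the theorem is the single identity $q-p=2$. From that point on, however, you have a plan rather than a proof, and you say so yourself when you call the calibration of the boundary contribution ``the technical core of the proof.'' That core is exactly what is missing, so there is a genuine gap.

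Concretely, the Poincar\'e--Hopf route as you set it up cannot be closed without a construction you do not supply. If a field on $S(w)$ has local index $i_+$ at every positive vertex, $i_-$ at every negative vertex, and no other zeros, then $i_+p+i_-q=\chi(S(w))=2-2g$; for this, together with $p+q=4g-2$, to force $p=2g-2$ and $q=2g$ for all $g$, you must have $i_+=-1$ and $i_-=0$, i.e.\ the negative vertices are not singularities at all. You would therefore need to exhibit a field, built from the Bieulerian path, that is singular of index $-1$ precisely at the positive vertices; nothing in the proposal explains how the sign of a vertex --- defined through the order in which the subwords $ab^{-1}$, $bc^{-1}$, $ca^{-1}$ occur in $w$ --- localizes to such an index. (Allowing an extra singularity in the interior of the unique face, as you suggest, only makes the linear system underdetermined, so by itself it pins down nothing.) The inductive alternative has the same status: you would need a genus-reducing surgery on cubic Wicks forms that removes exactly two positive and two negative vertices while preserving conditions (i)--(iii) of the definition, and no such move is defined or shown to exist. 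The source \cite{BacherVdovina} establishes the count by a direct combinatorial analysis of how the three corners of each vertex sit inside the cyclic word (equivalently, of the dual one-vertex triangulation), not by an index theorem; to complete your write-up you should either carry out that combinatorial argument or actually construct and verify the index-$(-1)$ field.
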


\begin{defi}
Let $a$ be a letter of a Wicks form $W$. If we replace $a$ by a word $a_1...a_k$
(and its inverse by $a_k^{-1}...a_1^{-1}$), we will say that we extended the letter $a$.
\end{defi}

\begin{defi}
A word $V$ is an extended Wicks form, if it is obtained from a Wicks from $W$ by several extensions of letters.
\end{defi}

\section{Isomorphism problem for standard knots}

From now on we consider extended Wicks forms as cyclic words, written on boundaries of discs and 
the graph $\Gamma$ is associated to an extended Wicks form $W$, the word $W$ is written with letters of an alphabet $\alpha$.

\begin{theorem}\label{th:7.2}
The isomorphism problem of standard genus $g$ knots with $n$ crossings given by their alternating diagrams
is equivalent to the isomorphism problem of extended Wicks forms of genus $g$ and length $2n$.
\end{theorem}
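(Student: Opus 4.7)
The plan is to construct a bijection between standard alternating knot diagrams of genus $g$ with $n$ crossings (up to knot equivalence) and extended Wicks forms of genus $g$ and length $2n$ (up to isomorphism), and to check that the bijection is realized by low-complexity reductions in both directions, so that decision algorithms transport.

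First, from a diagram to an extended Wicks form: given a standard alternating diagram $D$ with $n$ crossings, label the crossings by distinct letters $a_1,\dots,a_n$, fix a basepoint and orientation, and traverse the knot once, writing $a_i$ at the first visit to the crossing labelled $a_i$ and $a_i^{-1}$ at the second. This yields a cyclic word $w(D)$ of length $2n$. Wicks form condition (i) is immediate; condition (ii) follows from $D$ being reduced; condition (iii) follows from standardness, because a cancellation $a^\epsilon b^\delta \leftrightarrow x,\ b^{-\delta} a^{-\epsilon} \leftrightarrow x^{-1}$ would amount to $a,b$ forming a parallel clasp, whereas in a standard diagram every neighbored pair forms a reverse clasp (see the discussion at the start of Section 5). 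The associated graph $G(w)$ has the Seifert circles of $D$ as vertices, so its vertices have valences $2$ or $3$ by Definition 4.9, hence $w(D)$ is an extended Wicks form obtained by extending the letters of the underlying $3$-valent Wicks form (each maximal chain of valence-$2$ Seifert circles corresponds to one extension). The genus of $w(D)$ equals $g$ by Theorem 3.6.

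Second, from an extended Wicks form back to a diagram: given $V$ of genus $g$ and length $2n$, contracting each maximal extension to a single letter produces a Wicks form $W$ whose graph $G(W)$ is $3$-valent on $4g-2$ vertices. For $V$ to arise from a standard knot, $G(W)$ must be planar and $3$-connected, as in the corollary after Theorem 5.5. Applying the explicit construction described in Section 5 to $G(W)$, then re-inserting the valence-$2$ vertices recorded by the extensions of $V$, recovers the standard alternating diagram $D$. These two processes are inverse to each other by inspection.

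Third, and this is the crux, isomorphism of extended Wicks forms matches knot equivalence of the associated standard knots. By Menasco--Thistlethwaite (Theorem \reference{TMT}), two alternating diagrams of the same knot are flype-equivalent; by part (3) of Theorem 5.5 a standard diagram admits no non-trivial flype; hence knot equivalence for standard knots reduces to combinatorial equivalence of their alternating diagrams (up to mirroring, as noted in the discussion of Gauss diagrams in Section 4). This combinatorial equivalence is exactly Wicks form isomorphism: cyclic permutation of $w$ absorbs the choice of basepoint in the traversal, reversal of $w$ absorbs the choice of orientation and of mirror image, and the alphabet bijection $\varphi$ absorbs the relabeling of crossings. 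The reductions are entirely local, so they lift the complexity bounds required for Theorem \ref{th:iso}.

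The main obstacle will be the third step, specifically matching the ambiguities carefully: one must ensure that no ``accidental'' Wicks form isomorphism fails to come from a genuine diagram symmetry, and conversely that the rotational and mirror symmetries of the Bieulerian path (of order at most $6$ and $2$ as noted at the end of Section 5, citing \cite{BacherVdovina}) are all realized as Wicks form isomorphisms. A secondary technical point is the precise compatibility between letter extensions of the Wicks form and insertion of degree-$2$ vertices in the $3$-valent graph, which must be canonical so that the two reductions are mutually inverse on isomorphism classes rather than merely on representatives.
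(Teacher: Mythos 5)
Your proposal follows essentially the same route as the paper: pass from the alternating diagram to its Gau\ss{} diagram and quadratic word, observe that standardness forces the associated graph to have only valence-$2$ and valence-$3$ vertices so that the word is an extended Wicks form, and use Menasco--Thistlethwaite together with the absence of non-trivial flypes on the generating diagrams (Theorem 5.5) to identify knot equivalence with combinatorial equivalence of the words. Your write-up is in fact more explicit than the paper's about the inverse construction and about which ambiguities (basepoint, orientation, mirroring, relabelling) are absorbed by which part of the Wicks-form isomorphism.

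Two points deserve correction, though neither derails the argument. First, your verification of condition (iii) is backwards: a factor pair $a^{\epsilon}b^{\delta}$, $b^{-\delta}a^{-\epsilon}$ arises exactly when $a,b$ form a \emph{reverse} clasp (antiparallel strands traverse the two crossings in opposite orders), not a parallel one; since standard diagrams of crossing number exceeding $6g-3$ necessarily contain reverse clasps, the word $w(D)$ genuinely violates (iii) at those places. This is harmless precisely because the target object is an \emph{extended} Wicks form, which is permitted to violate (iii) along the extensions (the valence-$2$ vertices), as your very next sentence correctly records --- but the claim ``condition (iii) follows from standardness'' should be deleted rather than proved. Second, Theorem 5.5(3) asserts the absence of non-trivial flypes for the diagram built from the $3$-connected trivalent graph, i.e.\ essentially for the generating diagram; a general standard diagram with long twist regions does admit flypes inside those regions, and one needs the observation (as in the paper, citing \cite{SV}) that all flypes within a generating series reduce to flypes of the generating diagram and hence act trivially on the combinatorial type. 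With these repairs your argument matches the paper's.
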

\begin{proof}
Let $K_1$ and $K_2$ be two standard knots given by their alternating diagrams.
First consider the case when both $K_1$ and $K_2$ are generating diagrams.
By \cite{MenThis} any two diagrams of alternating knots can be obtained from one another
by a sequence of flypes. By the Theorem 5.5, \cite{SV}, alternating diagrams of generating diagrams do not admit flypes,
so the isomorphism class of a standard generating knot is uniquely determined by its alternating diagram.
Alternating diagrams of $K_1$ and $K_2$ uniquely determine Gau\ss{} diagrams $D_1$
and $D_2$, and Gau\ss{} diagrams $D_1$ and $D_2$ uniquely determine two quadratic words
$W_1$ and $W_2$ (see the beginning of Chapter 4). By the Theorem 5.7 and explicit description
of standard alternating knots, all the vertices of the graphs $\Gamma_1$ and $\Gamma_2$
corresponding to $W_1$ and $W_2$ have valencies two or three, so $W_1$ and $W_2$
are extended Wicks forms.

Two diagrams of a(n alternating) knot in the same generating series are transformable into each other by a flype the generating diagram, see \cite{SV}, p.10-11. Since the generating diagrams we consider
do not admit flypes, the isomorphism type is uniquely defined by an extended Wicks form, as before.

\end{proof}

\section{Computational complexity}
In this section we will construct low complexity algorithms to solve some problems about strictly quadratic words in a free group and then use these algorithms to prove Theorems \ref{th:genus} and \ref{th:iso}.
\begin{prop} \label{prop:iso} There exists an  algorithm with time complexity $n\log n$  that given two strictly quadratic cyclically reduced words $w$, $w_1$ of length $2n$ in the free group $F(X)$ determines if there is a permutation $\sigma$ of the letters in $X$ such that $w_1(\sigma X)$  is a cyclic permutation of $w(X)$.
\end{prop}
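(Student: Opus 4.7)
The plan is to reduce the question to testing whether two $2n$-long sequences over a small alphabet are cyclic rotations of each other, which is a classical string-matching task solvable in the required time.

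For a strictly quadratic word $w=w_1w_2\cdots w_{2n}$, each letter appears exactly twice in $w$, so there is a well-defined partner function $f\colon\{1,\dots,2n\}\to\{1,\dots,2n\}$ where $f(i)$ is the unique index $j\neq i$ with $w_j\in\{w_i,w_i^{-1}\}$. Attach to position $i$ the pair $(d_i,\varepsilon_i)$, where $d_i:=f(i)-i\bmod 2n$ and $\varepsilon_i\in\{0,1\}$ records whether $w_{f(i)}$ equals $w_i$ or $w_i^{-1}$. Call the tuple $\mathrm{sig}(w):=((d_1,\varepsilon_1),\dots,(d_{2n},\varepsilon_{2n}))$ the signature of $w$. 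The plan is first to prove the following equivalence: $w$ and $w_1$ differ by a letter renaming composed with a cyclic shift if and only if $\mathrm{sig}(w_1)$ is a cyclic rotation of $\mathrm{sig}(w)$.

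The forward implication is immediate: a permutation $\sigma$ of $X$ preserves the partner structure and preserves whether two occurrences of a letter come with equal or opposite sign, so renamings leave the signature unchanged; a cyclic shift of $w$ obviously cyclically shifts the signature by the same amount. For the converse, suppose $\mathrm{sig}(w_1)$ equals $\mathrm{sig}(w)$ after shifting by $k$; define $\sigma$ on letters of $w_1$ by declaring $\sigma(x)$ to be the letter at position $i+k \bmod 2n$ in $w$ whenever $x$ occurs at position $i$ in $w_1$. This is well-defined because positions carrying the same letter (up to inversion) in $w_1$ are exactly pairs $(i,f_1(i))$, and after the shift the corresponding pair in $w$ is $(i+k,f(i+k))=(i+k,f_1(i)+k)$, which again carries a single letter up to inversion; the $\varepsilon$-values match, so signs are consistent, and $\sigma$ extends to a permutation of $X$.

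Given the equivalence, the algorithm is: (1) compute $\mathrm{sig}(w)$ and $\mathrm{sig}(w_1)$ in $O(n)$ letter comparisons by a single pass storing the first occurrence of each letter in a lookup table of size $|X|\le n$; (2) concatenate $\mathrm{sig}(w)\cdot\mathrm{sig}(w)$ and test whether $\mathrm{sig}(w_1)$ occurs as a contiguous subword, using the Knuth–Morris–Pratt algorithm. KMP uses $O(n)$ symbol comparisons, and each comparison of two pairs $(d,\varepsilon)$ manipulates $O(\log n)$-bit integers, yielding total bit complexity $O(n\log n)$.

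The main obstacle is the converse direction of the equivalence, i.e.\ actually reconstructing the permutation $\sigma$ from the signature match; the slightly subtle point is checking that $\sigma$ is well-defined on every letter (no two occurrences of the same letter of $w_1$ can be mapped to different letters of $w$), and this reduces precisely to the fact that the partner function, together with the $\varepsilon$-values, is preserved under the shift. Once that is settled, the complexity bound follows from the standard analysis of KMP combined with the linear-time signature construction.
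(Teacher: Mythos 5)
Your proof is correct and follows essentially the same route as the paper: encode each position of the word by the cyclic distance to its partner occurrence, observe that equivalence under letter renaming plus cyclic shift is detected by cyclic rotation of the resulting signature sequences, and test this with Knuth--Morris--Pratt on the doubled sequence, the $n\log n$ bound coming from $O(\log n)$-bit comparisons of the entries. Your write-up is in fact slightly more careful than the paper's, since you record the sign data $\varepsilon_i$ and explicitly verify the converse direction (well-definedness of the letter permutation $\sigma$), both of which the paper leaves implicit.
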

\begin{proof}  The word $w$ of length $2n$ will be represented as an array (a string of pairs) $\mathcal W$ such that  $W_m$ is a pair consisting of a letter in position $m$ of $w$ (say, $a_m$ or $a_m^{-1}=A_m$) and number $m$.
The second array $\mathcal V$ consists of $n$ triples indexed by $n$ letters $a_i$. A triple $V_{a_i}$ consists of $a_i,$ position $m$ (in binary) of $a_i$ and position $k$ (in binary) of $A_i$.  

The algorithm scans the array $\mathcal W$ and creates the array $\mathcal V$. Scanning the pair $(a_i,m)$ or $(A_i, m)$ it puts $m$ in the second position  of $V_{a_i}$ if this position has not been filled in yet, or in the third position if the second position has  already been filled.  For each triple $(a_i, m,k)$ we compute $d_m=k-m$ and $d_k=k+2n-m$.  We construct a sequence $d_1(w),\ldots ,d_{2n}(w)$. Since we need $\mathcal O(\log n) $ time to subtract two numbers that are less or equal to $2n$, this takes  time $\mathcal O (n\log n).$

Now $w_1(\sigma X)$  is a cyclic permutation of the word $w(X)$ if and only if the sequence 
$d_1(w_1),\ldots ,d_{2n}(w_1)$ is a cyclic permutation of $d_1(w),\ldots ,d_{2n}(w)$. This can be decided in linear time in $n$ using  Knuth-Morris-Pratt  substring searching algorithm \cite{KMP}, \cite{Mat}.  This algorithm searches for occurrences of a word  within a  text string  by employing the observation that when a mismatch occurs, the word itself contains sufficient information to determine where the next match could begin, thus bypassing re-examination of previously matched characters.

\end{proof}

\begin{prop} \label{prop:genus} There exists an  algorithm with linear time complexity  that computes the genus of a  strictly quadratic cyclically reduced word  of length $2n$ in the free group $F(X)$.  
\end{prop}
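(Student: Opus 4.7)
The plan is to reduce the computation of $g$ to counting the vertex equivalence classes in the polygonal identification that produces $S(w)$, and then to obtain that count via a linear-time pass through a $2$-regular auxiliary graph. Recall that $S(w)$ is the oriented compact connected surface obtained from a $2n$-gon by gluing, for every letter $a_i$, the two sides labeled $a_i$ and $a_i^{-1}$ in the orientation-reversing way dictated by the exponents. The resulting CW-structure on $S(w)$ has one face, exactly $n$ edges, and some number $V$ of vertex classes, so $\chi(S(w)) = V - n + 1 = 2 - 2g$ and $g = (n + 1 - V)/2$. It therefore suffices to compute $V$ in time $O(n)$.

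To do this, label the corners of the polygon by $0,1,\dots,2n-1$ cyclically, so that corner $k$ lies between sides $k$ and $k+1$, and build the identification graph $H$ on these $2n$ corners: for each letter $a_i$ occurring at positions $k$ and $l$ of $w$, the identification of side $k$ with side $l$ glues corner $k-1$ to corner $l$ and corner $k$ to corner $l-1$ (indices taken mod $2n$), and we add these two edges to $H$. Each corner receives exactly one edge from the letter immediately to its left and one from the letter immediately to its right, so $H$ is $2$-regular on $2n$ vertices; in particular $H$ is a disjoint union of cycles and $V$ is simply its number of connected components.

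The algorithm then performs three linear scans. In step (i) we walk through $w$ once to build an array that stores, for each letter $a_i\in X$, the pair of positions at which it occurs in $w$; this is analogous to the array $\mathcal{V}$ used in Proposition~\ref{prop:iso}. In step (ii) we walk through this array once to emit the two edges of $H$ associated to each letter, obtaining $H$ in adjacency-list form. In step (iii) we run a standard mark-and-follow traversal of $H$: repeatedly pick any unmarked corner, walk its cycle to completion while marking its vertices, and increment a counter; stop when every corner is marked. The final value of the counter is $V$, and we return $g = (n+1-V)/2$.

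The one point requiring care is obtaining genuine linear time rather than the $O(n\,\alpha(n))$ that a generic union-find reduction would yield; this is guaranteed precisely by the $2$-regularity of $H$, which lets the traversal in (iii) run in time $O(|V(H)|+|E(H)|)=O(n)$ with a trivial array-based marker, no union-find required. Correctness of the edge recipe in (ii) is a routine unpacking of the polygonal gluing, and orientability of $S(w)$ is automatic from the hypothesis that $w$ is a strictly quadratic word whose two occurrences of each letter carry opposite exponents, which is the Wicks-type setting relevant to the subsequent applications to knot genus.
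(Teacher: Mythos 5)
Your proposal is correct and takes essentially the same approach as the paper: the paper likewise builds a gluing graph on the $2n$ corner positions (edges $(m,k+1)$ and $(k,m+1)$ for each letter occurring at positions $m$ and $k$), counts its connected components by a linear-time BFS/DFS to get the number $V$ of vertices of $\Gamma$, and computes $g$ from $\chi = V - n + 1$. Your extra remark that $2$-regularity makes the traversal trivially linear is a harmless refinement of the same argument.
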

\begin{proof} Let $\mathcal W$ and $\mathcal V$ be the arrays defined in the proof of Proposition \ref{prop:iso}. 
We now define the graph $\Delta$  with vertices numbered from $0$ to $2n-1$ and edges obtained from the array $\mathcal V$. For each triple $V_{a_i}=(a_i, m, k)$ there will be two edges $(m,k+1)$ and $(k, m+1).$ Each connected component of $\Delta$ represents one vertex of the graph $\Gamma$. Connected components can be of the form 
$\{i,j\}$ or $\{i,j,t\}$ or $\{i_1\ldots i_k\}, k\leq 2n.$  BFS or DFS algorithms find connected components from the list of edges in
time $\mathcal O (|E|)$, therefore in time $\mathcal O(n)$.  Now we know the number of vertices $|V|$ of the graph $\Gamma$ and we know that it has $n$ edges. Therefore the Euler characteristic is $\kappa =|V|-n+1$ and the genus $g= \frac{1}{2} (2-\kappa ).$



\end{proof}

We will recall definitions of some other complexity classes that we will consider.

{\em Logspace} (denoted L) is the class of functions computable by a deterministic Turing
machine with working tape bounded logarithmically in the length of the input. There are two more tapes, the input tape, where we can only read but cannot write, and the output tape where we can write but cannot read while working.

For every $n, m \in N$ a Boolean circuit $C$ with $n$ inputs and $m$ outputs 
is a directed
acyclic graph. It contains $n$ nodes with no incoming edges; called the input nodes
and $m$ nodes with no outgoing edges, called the output nodes. All other nodes
are called gates and are labeled with one of $\vee$, $\wedge$ or $\neg$ (in other words, the logical
operations OR, AND, and NOT). The  $\vee$, $\wedge$ nodes have fanin (i.e., number of
incoming edges) of 2 and the $\neg$ nodes have fanin 1. The size of $C$, denoted by $|C|$,
is the number of nodes in it.
The circuit is called a Boolean formula if each node has at most one outgoing edge.

 A $TC^0$
circuit with $n$ inputs is a boolean circuit of constant
depth using NOT gates and unbounded fan-in AND, OR, and MAJORITY
gates, such that the total number of gates is bounded by a polynomial function
of $n$. A MAJORITY gate outputs 1 when more than half of its inputs are 1. A
function $f(x)$ is $TC^0$
-computable (more casually, an algorithm is in $TC^0$) if for
each $n$ there is a $TC^0$
circuit $F_n$ with n inputs which produces $f(x)$ on every
input $x$ of length $n$. The composition
of two $TC^0$
-computable functions is again $TC^0$
-computable.
Since this definition of being computable only asserts that such a family
${F_n}_{n=1}^{\infty}$
of circuits exists, one normally imposes in addition a uniformity condition
stating that each $F_n$ is constructible in some sense. We will only be
concerned here with standard notion of DLOGTIME uniformity, which asserts
that there is a random-access Turing machine which decides in logarithmic time
whether in circuit $F_n$ the output of gate number $i$ is connected to the input of
gate $j$, and determines the types of gates $i$ and $j$. We refer the reader to \cite{Vol99}
for further details on $TC^0$. The problems Iterated Addition, Iterated Multiplication,
Integer Division are all in $TC^0$
no matter whether inputs are given in unary or binary \cite{Vol99}. 

The relation between the classes is as follows:

$$TC^0\subseteq L\subseteq P, $$
where $P$ denotes the class of problems solvable in polynomial time.
\begin{prop} \label{prop:genus1} \label{prop:genus2} There exists a Logspace algorithm that computes the genus of a  strictly quadratic cyclically reduced word  in the free group $F(X)$.  \end{prop}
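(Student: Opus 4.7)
The plan is to replicate the strategy of Proposition prop:genus, but implement everything in logarithmic work-space by invoking Reingold's theorem ($\mathrm{SL} = \mathrm{L}$). Recall the genus is
\[
g \;=\; \tfrac{1}{2}(2-\kappa)\;=\;\tfrac{1}{2}\bigl(1 - |V(\GG)| + n\bigr),
\]
where $|V(\GG)|$ equals the number of connected components of the auxiliary graph $\Gauss diagram$ on $2n$ vertices whose edges are read off from the pairings of inverse letter occurrences in $w$. All arithmetic on numbers $\le 2n$ is in $\mathrm{L}$, so the entire problem reduces to computing $|V(\GG)|$ in logarithmic space.

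First I would represent $\Gauss diagram$ \emph{implicitly}, because we cannot afford to store the arrays $\mathcal W, \mathcal V$ from the previous proof. With the word $w$ on the read-only input tape, a vertex of $\Gauss diagram$ is simply an index $v \in \{0,\dots,2n-1\}$, writable in $O(\log n)$ bits. Given $v$, the two edges incident to $v$ are recovered as follows: scan the input to read the letter $\ell$ at position $v$, then scan again to find the unique position $v'$ at which $\ell^{-1}$ occurs, which yields the neighbor $v'+1 \pmod{2n}$; analogously the letter at position $v-1$ determines the other neighbor. Each such adjacency query uses only $O(\log n)$ space (two counters and a small buffer for the current letter), so $\Gauss diagram$ is an $\mathrm{L}$-uniform graph.

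Next I would count connected components using Reingold's $\mathrm{L}$-algorithm for undirected $s$-$t$ connectivity. Iterate $v$ from $0$ to $2n-1$ in an outer loop. For each $v$, use an inner loop over $u < v$ and invoke Reingold's algorithm on $(u,v)$ as an $\mathrm{L}$-subroutine with oracle access to the implicit adjacency of $\Gauss diagram$; mark $v$ as a \emph{canonical representative} iff no such $u$ is in its component. Increment a binary counter $C$ whenever a canonical representative is found. At the end $C = |V(\GG)|$. Since $\mathrm{L}$ is closed under composition (the oracle calls into the adjacency computation and into Reingold's procedure each use $O(\log n)$ space and can be simulated by expanding the work tape by a constant factor), the whole procedure stays in Logspace. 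Finally, compute $g = (1 - C + n)/2$ by iterated addition and a single bit-shift, both in $\mathrm{L}$.

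The only nontrivial ingredient is the appeal to Reingold's theorem; everything else (adjacency computation, composition with $\mathrm{L}$ oracles, arithmetic on $O(\log n)$-bit integers) is standard. The main thing to verify carefully is that the graph $\Gauss diagram$ is genuinely accessible in Logspace, i.e., that the neighbor function is $\mathrm{L}$-computable from the raw input word without any pre-stored tables. Once that is established, counting representatives by the outer-loop trick is a textbook way to turn reachability in $\mathrm{L}$ into a component count in $\mathrm{L}$, completing the proof.
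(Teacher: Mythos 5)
Your argument is correct, but it reaches Logspace by a genuinely different route than the paper. The paper's proof never builds the auxiliary graph explicitly or implicitly as a connectivity instance: it writes $w$ as a single $2n$-cycle, multiplies it by the involution $\prod_{i}(a_i,a_i^{-1})$, observes (following Walsh--Lehman) that the cycles of the product are exactly the vertices of the glued-up graph $\Gamma$, and then invokes Cook--McKenzie's Theorem~3 (problem PP2: composing two permutations given in cycle notation is in $L$) to get the cycle count, hence the Euler characteristic and the genus. You instead treat the same incidence structure as an implicit undirected graph $\Delta$ with an $L$-computable neighbor function, and count components by the minimal-representative trick on top of Reingold's $\mathrm{SL}=\mathrm{L}$ theorem. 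Both are valid; your version is self-contained modulo Reingold, while the paper's leans on a specific permutation-composition result. One remark: Reingold is much heavier machinery than you need here. Since every position of $w$ has exactly one successor (inverse-position plus one) and one predecessor under your adjacency rule, $\Delta$ is a disjoint union of cycles --- precisely the cycles of the paper's permutation $\pi$ --- so reachability can be decided in Logspace by simply iterating the successor function at most $2n$ times and checking whether one ever visits an index smaller than the start; this elementary walk replaces the Reingold oracle entirely and brings your proof essentially in line with the paper's. The rest of your argument (implicit representation to avoid storing the arrays, closure of $L$ under composition, $O(\log n)$-bit arithmetic) is sound and is exactly the kind of bookkeeping the paper leaves implicit.
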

\begin{proof} The genus of the word $w(a_1,\ldots a_n)$ is the genus of the surface $S$ that one obtains when glues together edges with the same label of the polygon with boundary label $w$.  The word $w$ becomes the label of the graph $\Gamma$ on the surface. We will write the word $w$ as a cyclic permutation.
After multiplying this permutation by $\Pi _{i=1}^n(a_i,a_i^{-1})$ we obtain a permutation $\pi$ such that the cycles 
of $\pi$ correspond to the vertices of $\Gamma$, see \cite{VL}, Section 2, and, therefore, can compute the Euler characteristic and the genus of the surface $S$. The algorithm to represent the product of two permutations given by their cyclic representation also as a cyclic representation belongs to Logspace by \cite{CM}, Theorem 3 (this is the problem PP2).
\end{proof}

\begin{prop} \label{prop:genus1} There exists a $TC^0$ algorithm that computes the genus of a  strictly quadratic cyclically reduced word $w$ in the free group $F(X)$ corresponding to the standard  alternating knot. \end{prop}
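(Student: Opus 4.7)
The plan is to exploit the defining property of standard alternating knots: every Seifert circle (vertex of the associated graph $\Gamma$) has valence exactly $2$ or $3$, as guaranteed by the standardness hypothesis. In the auxiliary graph $\Delta$ built in the proof of Proposition \ref{prop:genus}, each connected component represents one vertex of $\Gamma$ and has size equal to its valence, while every vertex of $\Delta$ has degree exactly two by construction (one edge contributed by the letter at position $j$, one by the letter at position $j-1$). Consequently $\Delta$ is a disjoint union of cycles, and in the standard case each cycle has length either $2$ or $3$ — a picture of bounded complexity per component.

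First I would build the arrays $\mathcal W$ and $\mathcal V$ of Proposition \ref{prop:iso} in DLOGTIME-uniform $TC^0$: reading and writing positions in binary and performing the iterated additions required are standard $TC^0$ operations. Next, for each position $j\in\{0,\ldots,2n-1\}$, I would compute the two $\Delta$-neighbors of $j$ by a constant-depth lookup into $\mathcal V$, using the letters occupying positions $j$ and $j-1$ in $w$. Because the component of $j$ has size at most three, I can then decide in constant depth whether $j$ is the minimum-indexed vertex of its component: inspect the neighbors $u,v$ of $j$, then the neighbors of $u$ and $v$, verify that the cycle closes up in length $2$ or $3$, and compare $j$ against the other (at most two) indices in the cycle.

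Finally, the number $|V|$ of vertices of $\Gamma$ equals the number of positions $j\in\{0,\ldots,2n-1\}$ that are component minima, which is a threshold count and hence in $TC^0$. The genus is then $g = (n - |V| + 1)/2$, and subtraction, comparison, and division by $2$ of binary integers all lie in $TC^0$ by \cite{Vol99}. The main obstacle is checking that each primitive step remains in DLOGTIME-uniform $TC^0$, but binary indexing, constant-depth neighbor lookup, and threshold counting are all standard $TC^0$ primitives. The standardness hypothesis is essential here, since without the $O(1)$ bound on component sizes one would need graph reachability to identify components, and reachability is not believed to lie in $TC^0$.
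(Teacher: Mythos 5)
Your proposal is correct and follows essentially the same route as the paper: both exploit the fact that for standard knots the components of $\Delta$ (equivalently, the cycles of the associated permutation) have length at most $3$, so each component can be identified by a constant-depth local lookup, after which a global threshold count and $TC^0$ arithmetic yield $|V|$ and the genus. The only cosmetic difference is in the final count: you count component minima, while the paper tallies the length-$2$ paths and $2$-cycles and computes (number of triples)$/3$ plus the number of pairs — both are standard $TC^0$ counting steps.
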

\begin{proof} In this case the product of the involution $\Pi$ and the cycle $(w)$ corresponding to $w$   does not have cycles longer than $3$.  
 We encode a  word $w$ of length $2n$ 
as the array $\mathcal W$ from the proof of Proposition \ref{prop:iso}. The edges of the graph $\Delta$ represent the permutation $\sigma$ on $n$ elements presented pointwise, as the set of pairs $(k,m)$, such that $\sigma (k)=m$.
We can sort (sorting in in $TC^0$) these pairs according to the order of the first component. To find the Euler characteristic we have to determine the number of cycles in the permutation $\sigma$. Since the  knot is standard we know that the maximal length of a cycle is three.

Let the second level array  have cells $\delta _{m,k}$, where $\delta _{m,k}$ contains the  pair $(m,k)$ if it is an edge of $\Delta$  and contains zero otherwise.  

In the next level array we will have a  triple  $(i,j,k)$ for each pair of edges  $(i,j)$ and $(j,k)$ in $\Delta$, a pair $(i,j)$  
for each pair $(i,j), (j,i)$ and zero for pairs $(i,j),(k,m)$ where $i,j,k,m$ are distinct.
Then we divide the number of different triples by three and add the number of pairs. This is the number of connected components in $\Delta$ and number of vertices in $\Gamma$.

\end{proof}

\begin{prop} \label{prop:iso1} There exists a $TC^0$ algorithm that given two strictly quadratic cyclically reduced words $w_1$, $w_2$ in the free group $F(X)$ determines if there is a permutation $\sigma$ of the letters in $X$ such that $w_2(\sigma X)$  is a cyclic permutation of $w(X)$. Therefore this problem is also in $L$.

\end{prop}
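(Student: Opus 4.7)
The plan is to recast the $n\log n$-time algorithm of Proposition \ref{prop:iso} as a uniform $TC^0$ circuit family. Recall that in that argument the isomorphism question is reduced to checking whether the integer sequence $d(w_1)=d_1(w_1),\ldots,d_{2n}(w_1)$ is a cyclic permutation of $d(w_2)=d_1(w_2),\ldots,d_{2n}(w_2)$, where each $d_m$ depends only on the positions of the two occurrences of a matched pair of letters. This invariant forgets the letter names, so it is precisely what is needed to quotient by the permutation $\sigma$.

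First, I would compute $d(w_1)$ and $d(w_2)$ in parallel. Encoding each input word of length $2n$ as the array $\mathcal W$ from the proof of Proposition \ref{prop:iso}, the task of pairing each letter with its inverse so as to produce the pairs $(m,k)$ is a sorting-and-adjacency problem, which by the standard results of \cite{Vol99} lies in $TC^0$. Given each pair $(m,k)$, the entries $d_m$ and $d_k$ are single subtractions (or modular subtractions) of integers of $\mathcal O(\log n)$ bits, and hence live in $AC^0\subseteq TC^0$. All $2n$ entries of each $d(w_i)$ can therefore be produced in parallel by a single constant-depth $TC^0$ subcircuit.

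Next, to decide whether $d(w_1)$ is a cyclic rotation of $d(w_2)$, for each shift $s\in\{0,1,\ldots,2n-1\}$ I would compare, in parallel, the entries $d_i(w_1)$ and $d_{(i+s)\bmod 2n}(w_2)$ for every $i$; the addressing $(i+s)\bmod 2n$ is itself $TC^0$-computable. The conjunction over the $2n$ coordinates and the subsequent disjunction over the $2n$ candidate shifts are unbounded-fan-in gates of polynomial size, so the whole circuit still sits in $TC^0$. By Proposition \ref{prop:iso} its output correctly decides the isomorphism problem, and the inclusion $TC^0\subseteq L$ recorded earlier then yields the Logspace claim. The main point requiring care is DLOGTIME-uniformity of the wiring, in particular for the shift-dependent addressing and for the sorted pairing step; this is handled by the same toolkit from \cite{Vol99} that was used in Proposition \ref{prop:genus1}, so no genuinely new obstacle arises beyond that which has already been overcome.
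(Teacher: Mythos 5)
Your proposal is correct and follows essentially the same route as the paper: both replace the words by a letter-name-free positional encoding of the matched pairs (you use the offset sequence $d(w_i)$ from Proposition \ref{prop:iso}, the paper uses the equivalent $\delta_{ij}$ pair arrays), then test all $2n$ cyclic shifts in parallel with a conjunction over coordinates and a disjunction over shifts. Your added remarks on the $\mathcal O(\log n)$-bit arithmetic and DLOGTIME-uniformity are consistent with, and slightly more explicit than, the paper's argument.
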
\begin{proof}
To decide if two words of length $n$ differ by a permutation of letters and a cyclic permutation we organize the binary circuit as follows. We encode both cyclic word $w_1, w_2$ of length $n$ represented as a labelled cycle graph, as a set of triples of natural numbers (encoded as binaries) as above. We also take all cyclic permutations $w_{2k}$ of $w_2$ by adding $1$ to the first two entries of corresponding triples. For $w_1, w_{2k}$ we define $\delta _{ij}(w_1)$,      $\delta _{ij}(w_{2k})$ as above. Then we compare  $\delta _{ij}(w_1)$ with each      $\delta _{ij}(w_{2k})$ in parallel.
If for some $k$ for all $i,j$ $\delta _{ij}(w_1)=\delta _{ij}(w_{2k})$, then $w_1$ and $w_2$ differ by a permutation of letters and a cyclic permutation, otherwise they do not.

 \end{proof}

{\bf Proof of Theorem \ref{th:genus}} 

It was shown in \cite{STV} that for the alternating knots the genus of the knot and the genus of corresponding Wicks form coniside. The genus of a Wicks form is the same as the genus of the extended Wicks form. The statement of the theorem now follows from Propositions \ref{prop:genus}, \ref{prop:genus2} and \ref{prop:genus1}.  

{\bf Proof of Theorem \ref{th:iso}}

By Theorem \ref{th:7.2}, the isomorphism problem of standard genus $g$ knots with $n$ crossings given by their alternating diagrams
is equivalent to the isomorphism problem of extended Wicks forms of genus $g$ and length $2n$. Therefore the statement of the theorem follows from Propositions \ref{prop:iso} and \ref{prop:iso1}.

{\bf Acknowledgment.}  
The first author was supported by the PSC-CUNY  award, jointly funded by The Professional Staff Congress and The City University of New York and by a grant 461171 from the Simons Foundation. 
The second author thanks Hunter College CUNY for the Peluso Professorship award for 2017-2018, when most of the work on this paper was done. 
We thank I. Agol,  M. Lackenby, S. Vassileva and J. Macdonald for useful discussions and UC Berkeley and INI Cambridge for providing support.

\end{document}